\numberwithin{equation}{section}
\newtheorem{theorem}{Theorem}[section]
\newtheorem{lemma}[theorem]{Lemma}
\newtheorem{proposition}[theorem]{Proposition}
\newtheorem{prop}[theorem]{Proposition}
\newtheorem{remark}[theorem]{Remark}
\newtheorem{definition}[theorem]{Definition}
\renewcommand{\S}{S}
\renewcommand{\SS}{S'}
\begin{document}

\title{Nonlinear instability of Vlasov-Maxwell systems in the classical and quasineutral limits}

\author{Daniel Han-Kwan\footnotemark[1] \and Toan T. Nguyen\footnotemark[2]}

\date{\today}

\maketitle

\renewcommand{\thefootnote}{\fnsymbol{footnote}}

\footnotetext[1]{CNRS $\&$ \'Ecole polytechnique, Centre de Math\'ematiques Laurent Schwartz UMR 7640, 91128 Palaiseau Cedex, France. \\Email: daniel.han-kwan@polytechnique.edu}
\footnotetext[2]{Department of Mathematics, Pennsylvania State University, State College, PA 16802, USA. \\Email: nguyen@math.psu.edu}

\begin{abstract}

We study the instability of solutions to the relativistic Vlasov-Maxwell systems in two limiting regimes: the classical limit when the speed of light tends to infinity and the quasineutral limit when the Debye length tends to zero. First, in the classical limit $\varepsilon \to 0$, with $\varepsilon$ being the inverse of the speed of light, we construct a family of solutions that converge initially polynomially fast to an homogeneous solution $\mu$ of Vlasov-Poisson in arbitrarily high Sobolev norms, but become of order one away from $\mu$ in arbitrary negative Sobolev norms within time of order $|\log \varepsilon|$. Second, we deduce the invalidity of the quasineutral limit in $L^2$ in arbitrarily short time.

\end{abstract}

\section{Introduction}

We study the relativistic Vlasov-Maxwell system
\begin{equation}\label{clVM0} 
\left \{ \begin{aligned}
\partial_t f + \hat{v} \cdot \nabla_x f + (E + \frac{1}{c} \hat{v} \times B)\cdot \nabla_v f & =0,
\\
\frac{1}{c} \partial_t B + \nabla_x \times E = 0, \qquad \nabla_x \cdot E  &= \rho - 1,
\\
- \frac{1}{c} \partial_t E + \nabla_x \times B = \frac{1}{c} j , \qquad \nabla_x \cdot B & =0, \end{aligned}
\right.
\end{equation}
describing the evolution of an electron distribution function $f(t,x,v)$ at time $t\ge 0$, position $x\in \mathbb{T}^3:= \mathbb{R}^3/ \mathbb{Z}^3$, momentum $v \in \mathbb{R}^3$ and relativistic velocity
$$
\hat{v} = \frac{v}{\sqrt{1+  \frac{|v|^2}{c^2}}}. 
$$
In these equations, the parameter $c$ is the speed of light. Here, $\mathbb{T}^3$ is equipped with the normalized Lebesgue measure so that $\mathrm{Leb}(\mathbb{T}^3)=1$. The electric and magnetic fields $E(x,t), B(x,t)$ are three-dimensional vector fields, satisfying the classical Maxwell equations, with sources given by 
$$\rho (t,x)= \int_{\mathbb{R}^3} f \; dv, \quad j (t,x)= \int_{\mathbb{R}^3} \hat{v} f\; dv,$$ 
which denote the usual charge density and current of electrons. The background ions are assumed to be homogeneous with a constant charge density equal to one. 

We focus in the regime where $c \to +\infty$, that is known as the \emph{classical (or non-relativistic) limit of the Vlasov-Maxwell system}. The formal limit is the classical Vlasov-Poisson system:
\begin{equation}\label{VP} 
\left \{ \begin{aligned}
\partial_t f + {v} \cdot \nabla_x f + E \cdot \nabla_v f & =0,
\\
 \nabla_x \times E = 0, \qquad \nabla_x \cdot E  &= \rho - 1,
\end{aligned}
\right.
\end{equation}
where $\rho(t,x) = \int_{\mathbb{R}^3} f \, dv$.
This limit was justified on finite intervals of time in the independent and simultaneous works of Asano-Ukai \cite{AU}, Degond \cite{DEG}, and Schaeffer \cite{SCH}.
 We shall prove in this paper that \emph{in the classical limit, the Vlasov-Maxwell system can develop instabilities in time $\log c$, due to instabilities of the limiting Vlasov-Poisson system}.

 \bigskip

We are also interested in the following non-dimensional Vlasov-Maxwell system:
\begin{equation}\label{VM} 
\left \{ \begin{aligned}
\partial_t f + \hat{v} \cdot \nabla_x f + (E + \alpha \hat{v} \times B)\cdot \nabla_v f & =0,
\\
\alpha \partial_t B + \nabla_x \times E = 0, \qquad \varepsilon^2 \nabla_x \cdot E  &= \rho - 1,
\\
- \alpha \varepsilon^2 \partial_t E + \nabla_x \times B = \alpha j, \qquad \nabla_x \cdot B & =0, \end{aligned}
\right.
\end{equation}
with
$$
\begin{aligned}
&\hat{v} = \frac{v}{\sqrt{1+  \alpha{\varepsilon^2 |v|^2}}}, 
 \quad \rho (t,x)= \int_{\mathbb{R}^3} f \; dv, \quad j (t,x)= \int_{\mathbb{R}^3}\frac{v}{\sqrt{1+  \alpha{\varepsilon^2 |v|^2}}} f\; dv. 
\end{aligned}
$$
In physical units, the parameters $\alpha, \varepsilon$ are given by  
$$ \alpha = \sqrt{\frac{r_0}{\varepsilon_0}}  , \qquad \varepsilon = \sqrt{\frac{\varepsilon_0}{r_0 c^2}},$$
where $r_0$ denotes the classical electron radius, $\varepsilon_0$ is the vacuum dielectric constant, and $c$ is still the speed of light. The parameter $\varepsilon$ corresponds to the classical \emph{Debye length}  of the electrons; see \cite{PSR} for further discussions. In this work, we are interested in the regime where $\alpha \sim 1$ and $\varepsilon \to 0$, a limit in which the charge density of ions and electrons are formally equal. 
We shall thus refer to this problem as the \emph{quasineutral limit of the Vlasov-Maxwell system}. 
Note that $\alpha$ is equal to the ratio between $\frac{1}{\varepsilon}$ and $c$, so that this means that we consider that the inverse of the Debye length is of the same order as the speed of light.
 For simplicity, throughout the paper, we set $\alpha=1$.

\bigskip

The quasineutral limit of the Vlasov-Maxwell system has been studied previously by Brenier, Mauser and Puel \cite{BMP} and Puel and Saint-Raymond \cite{PSR} in the case where the initial density distribution converges in some weak sense to a monokinetic distribution (that is, a Dirac delta function in velocity). The convergence to monokinetic distributions can be interpreted from the physical point of view as \emph{vanishing temperature}: therefore, this is sometimes referred to as the \emph{cold electrons limit}.
The work \cite{PSR} furthermore describes the propagation of time oscillating waves, which turn out to be absent in the well-prepared framework considered in \cite{BMP}.

For each fixed $\varepsilon>0$, the global-in-time Cauchy theory for smooth solutions of \eqref{VM} remains an outstanding open problem. However, there are local strong solutions and continuation conditions (\cite{Wol1,Wol2,ASA,GS86, GSC97,GSC98,BGP,KS, LStr}, among others) or global weak solutions (\cite{DPL}). In this paper, we shall construct particular solutions of \eqref{VM} that are sufficiently smooth. This also pertains to system~\eqref{clVM0}, for each fixed $c>0$.

%
%

\bigskip

Formally, in the limit $\varepsilon \to 0$, it is straightforward to obtain the expected formal limit of \eqref{VM}, a system we shall call the \emph{kinetic eMHD} system:
\begin{equation}\label{kinetic-eMHD} 
\left \{ \begin{aligned}
\partial_t f^0 + v \cdot \nabla_x f^0 + (E^0 + v \times B^0)\cdot \nabla_v f^0 & =0,
\\
 \partial_t B^0 + \nabla_x \times E^0 = 0, \qquad \rho^0 &= 1,
\\
 \nabla_x \times B^0 = j^0, \qquad \nabla_x \cdot B^0 & =0, \end{aligned}
\right.
\end{equation}
in which 
$$ \rho^0 (t,x)= \int_{\mathbb{R}^3} f^0 \; dv, \quad j^0 (t,x)= \int_{\mathbb{R}^3}v f^0\; dv.
$$
By imposing that the distribution function is monokinetic, that is considering the ansatz 
$$f^0(t,x,v) =\rho^0(t,x) \delta_{v=u^0(t,x)},$$ 
where $\delta$ stands for the Dirac measure, it follows that $f^0$ is a solution in the sense of distribution to \eqref{kinetic-eMHD} if and only if $(\rho^0,u^0)$ satisfies the following hydrodynamic equations:
\begin{equation}\label{eMHD} 
\left \{ \begin{aligned}
\partial_t u^0 + \nabla \cdot (u^0 \otimes u^0) =E^0 + u^0 \times B^0,
\\
 \partial_t B^0 + \nabla \times E^0 = 0, \qquad \rho^0 &= 1,
\\
 \nabla \times B^0 = u^0, \qquad \nabla \cdot B^0 & =0. \end{aligned}
\right.
\end{equation}
This system is known in the literature as the \emph{electron Magneto-Hydro-Dynamics} equations (eMHD); see, for instance, \cite{KCI, BMP}. This motivates the choice of the name \emph{kinetic eMHD} for~\eqref{kinetic-eMHD}. The work \cite{BMP}, and then \cite{PSR}, justify the eMHD system from \eqref{VM} in the above monokinetic situation via the so-called modulated energy (or relative entropy) method devised by Brenier in \cite{B}. In this paper, we rather focus on the question of  {\em validity of \eqref{kinetic-eMHD} in the quasineutral limit.}  

\bigskip

The kinetic eMHD system \eqref{kinetic-eMHD} can be studied as follows. It is common to express $E^0$ and $B^0$ in terms of electromagnetic potentials $(\phi,A)$. Precisely, write 
\begin{equation}\label{potentials}
 E^0 = - \nabla \phi^0 - \partial_t A^0, \qquad B^0 = \nabla \times A^0,
 \end{equation}
in which $\phi^0$ is a scalar function and $A^0$ is a divergence-free vector potential. The equations satisfied by the electromagnetic field are then simply reduced to 
the elliptic equation  
\begin{equation} \label{ellip1}
-\Delta A^0 =  j^0,
\end{equation}
together with the constraint $ \rho^0 = 1$.
On the other hand, the potential $\phi^0$ is determined from the Vlasov equation. Indeed, there holds the conservation of charge density and current
$$
\begin{aligned}\partial_t \rho^0 + \nabla \cdot j^0&= 0,
\\
\partial_t j^0 + \nabla \cdot \int f^0 v \otimes v \, dv &= E^0+ j^0 \times B^0.
\end{aligned}$$
From the neutrality condition $\rho^0 = 1$, we conclude that the first moment $j^0$ is divergence-free. Hence, taking the divergence of the conservation of current, we end up with the following elliptic problem for $\phi^0$:
   \begin{equation} \label{ellip2}
 -\Delta \phi^0 = \nabla \cdot \Big( \nabla \cdot \int f^0 v \otimes v \, dv \Big)-  \nabla \cdot (j^0 \times B^0) .\end{equation}
 
The equations \eqref{potentials}-\eqref{ellip2}, together with the Vlasov equation in \eqref{kinetic-eMHD}, form a complete set of equations for solutions of the kinetic eMHD system. 
However, the elliptic problem \eqref{ellip2} reveals a loss of one $x$-derivative in $E^0$, or precisely in the curl part of $E^0$, as compared to $f^0$. One can expect this loss to be the source of some \emph{ill-posedness} for \eqref{kinetic-eMHD}. This feature is a first strong indication of the singularity of the quasineutral limit.
Note finally that this loss can only occur for \eqref{kinetic-eMHD}, while the eMHD equations are well-posed in the classical sense as shown in \cite{BMP}.


\bigskip

Lately, there have been many works \cite{Gr95,Gr96,Gr99,B,Mas,HKH,HI,HI2,HKR} that were devoted to the study of the quasineutral limit for Vlasov-Poisson systems, 
that is, the framework where there is no magnetic field, $B\equiv 0$. 
In that case, the formal limit is also straightforward to identify and corresponds to a kinetic version of the incompressible Euler equations. The limiting kinetic system also displays a loss of derivatives, through the electric field $E^0 = -\nabla \phi^0$, exactly as in \eqref{ellip2}. In the paper \cite{HKH}, using ideas originating from \cite{Gr99}, it is shown in particular that instabilities such as \emph{two-stream instabilities} (see \cite{pen,GS}) for the Vlasov-Poisson system give rise to instabilities in the quasineutral limit. These instabilities have a destabilizing effect and the formal limit is not true in general, even
on very short intervals of time $[0,T_\varepsilon]$, with $T_\varepsilon \to 0$ as $\varepsilon \to 0$.

\bigskip

One purpose of this work is to extend these ideas to the Vlasov-Maxwell system~\eqref{VM}.
As in the Vlasov-Poisson case  \cite{HKH}, the effect of instabilities of the Vlasov-Maxwell system in the quasineutral limit can  be observed in the high spatial frequency regime. More precisely, we introduce the hyperbolic change of variables: 
\begin{equation}\label{e-hyp} 
(t,x,v)\quad \mapsto\quad  (s,y,v): = \left(\frac{t}{\varepsilon}, \frac{x}{\varepsilon}, v\right)
\end{equation}
and set $E^\varepsilon = \varepsilon E$, $B^\varepsilon = B$, and $f^\varepsilon = f$. The Vlasov-Maxwell system \eqref{VM} then becomes  
\begin{equation}\label{clVM} 
\left \{ \begin{aligned}
\partial_s f^\varepsilon + \hat{v}  \cdot \nabla_y f^\varepsilon + (E^\varepsilon + \varepsilon \hat{v}  \times B^\varepsilon)\cdot \nabla_v f^\varepsilon & =0,
\\
\varepsilon \partial_s B^\varepsilon + \nabla_y \times E^\varepsilon = 0, \qquad \nabla_y \cdot E^\varepsilon  &= \rho^\varepsilon - 1,
\\
- \varepsilon \partial_s E^\varepsilon + \nabla_y \times B^\varepsilon = \varepsilon j^\varepsilon , \qquad \nabla_y \cdot B^\varepsilon & =0, \end{aligned}
\right.
\end{equation}
where 
\begin{equation}\label{def-rhoje}
\begin{aligned}
\hat{v} &= \frac{v}{\sqrt{1+  {\varepsilon^2 |v|^2}}}, \qquad \rho^\varepsilon (t,x)= \int_{\mathbb{R}^3} f^\varepsilon \; dv, 
\\ j^\varepsilon (t,x) &= \int_{\mathbb{R}^3}\frac{v}{\sqrt{1+  {\varepsilon^2 |v|^2}}} f^\varepsilon\; dv. 
\end{aligned}
\end{equation}

We therefore observe that in this \emph{long time} and \emph{spatially high frequency} regime, the quasineutral limit comes down to the study of the classical limit 
of Vlasov-Maxwell to Vlasov-Poisson system, that is~\eqref{clVM0} with the speed of light $c = \frac1 \varepsilon$.
We shall thus mainly focus on the study of the classical limit.

\bigskip

Consequently, it appears natural to study the effect of instability of equilibria of the Vlasov-Maxwell system \eqref{clVM} to deduce instability for~\eqref{VM}.
There have many works devoted to the stability problem over the past few years; see, for instance, \cite{G97, G99, GS99, GS00, LS07, LS07b, LS08} and the references therein. It turns out that in the limit $\varepsilon \to 0$, instability for the Vlasov-Poisson system~\eqref{VP} 
is sufficient to get an instability in the classical limit, thanks to a careful use of the iterative scheme developed by Grenier in his study of instability of boundary layers \cite{G}. In other words, we show that having an \emph{approximate growing mode} of the linearized Vlasov-Maxwell equations is enough.
Loosely speaking, we shall prove that if we deal with sequences of initial data converging polynomially (in $\varepsilon$) fast to an unstable equilibrium, then an instability is developed for the full Vlasov-Maxwell dynamics, in times of order $|\log \varepsilon|$.
We shall finally deduce the invalidity of the derivation of \eqref{kinetic-eMHD} in the quasineutral limit of~\eqref{VM}.

\section{Main results}

Our results rely on the existence of a growing mode for the linearized Vlasov-Maxwell system \eqref{VM}, around unstable \emph{homogeneous} equilibria $(f,E,B) \equiv (\mu(v), 0,0)$.  To ensure the non-negativity of the distribution functions we shall construct, we introduce the following condition: 


\begin{definition} [$\delta$-condition]
We say that a profile $\mu(v)$ satisfies the $\delta$-condition if $\mu$ is positive and satisfies
\begin{equation} \label{e-nonnegativity}
\sup_{v \in \mathbb{R}^3}  \,  \frac{|\nabla \mu(v)|}{(1+ |v|)\mu(v)} < + \infty.
\end{equation}
\end{definition}

~\\
The terminology is borrowed from \cite{HKH}; in that reference, a further relaxed $\delta'$-condition is also introduced, allowing non-negative equilibria. Our results here should apply to this $\delta'$-condition as well. For the sake of simplicity, we restrict ourselves to the above $\delta$-condition on $\mu$ throughout the paper. 

\bigskip

In what follows, the statement that $\mu(v)$ is spectrally unstable means that there is a growing mode of the form $e^{\lambda t} g(y,v)$, $g \in L^2$, with $\Re \lambda>0$, of the linearized Vlasov-Poisson problem around the homogeneous equilibrium $\mu(v)$, namely: $(\lambda, g)$ is a solution of 
\begin{equation} \label{e-insta}
\lambda g + v\cdot \nabla_y g + \nabla_v \mu (v) \cdot \nabla_y \Delta_y^{-1}  \left(\int_{\mathbb{R}^3} g \, dv \right)  = 0, \qquad \iint g(y,v) \; dydv =0. 
\end{equation}

Looking for a solution of the form $g =  e^{ik\cdot y}\hat g(v)$ 
and writing $\lambda = - ik \cdot \omega$, for some complex vector $\omega$, with $\Im \omega \cdot k >0$,
we are led to study
 $$ ik \cdot ( v - \omega ) \hat g (v) -  \frac{1}{|k|^2} i k \cdot \nabla_v \mu   \, \hat \rho = 0,$$  
in which $\hat \rho = \int \hat g(v)\; dv$. The above identity has a nonzero solution if and only if $(k,\omega)$ is such that the so-called \emph{Penrose instability criterion}
\begin{equation}\label{Penrose}
 \frac{1}{|k|^2} \int \frac{k \cdot \nabla_v \mu}{ k \cdot ( v - \omega ) } \; dv = 1, \quad \Im \omega \cdot k >0,
 \end{equation}
 is satisfied.
We have the following classical lemma (see \cite{pen,Degond}).

\begin{lemma}\label{lem-unmode} Let $\mu$ satisfy the Penrose instability criterion  \eqref{Penrose} at some point $(k_0, \omega_0)$. Then, $\mu$ is spectrally unstable and there exists a growing mode of the form 
\begin{equation}
\label{e-eigen}
f(s,y,v) = e^{\lambda_0 s} e^{ik_0\cdot y}\hat f(v),
\end{equation}
with $\lambda_0 = -i k_0\cdot \omega_0$ and 
\begin{equation}\label{clVM-def} \hat f  (v)=   \frac{1}{|k_0|^2} \frac{ \nabla_v \mu \cdot k_0}{k_0\cdot (v-\omega_0)} . \end{equation}
Reciprocally, if $\lambda$ is an eigenvalue for the linearized operator around $\mu$ then all eigenfunctions are linear combinations of functions of the form~\eqref{e-eigen}~-~\eqref{clVM-def}.
\end{lemma}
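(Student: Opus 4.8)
The plan is to insert the separated ansatz $f(s,y,v)=e^{\lambda_0 s}e^{ik_0\cdot y}\hat f(v)$ into the linearized Vlasov–Poisson equation \eqref{e-insta} and to identify \eqref{Penrose} as precisely the solvability (consistency) condition for a nonzero solution. As already noted in the excerpt, substituting the ansatz with $\lambda_0=-ik_0\cdot\omega_0$ and using $\nabla_y\Delta_y^{-1}e^{ik_0\cdot y}=-i k_0|k_0|^{-2}e^{ik_0\cdot y}$ reduces \eqref{e-insta} to the pointwise identity
\[
i\,k_0\cdot(v-\omega_0)\,\hat f(v)\;=\;\frac{i}{|k_0|^2}\,\bigl(k_0\cdot\nabla_v\mu\bigr)\,\hat\rho,\qquad \hat\rho:=\int_{\mathbb{R}^3}\hat f\,dv .
\]
The key point I would emphasize is that, because $\Im\omega_0\cdot k_0>0$, one has $\Im\bigl(k_0\cdot(v-\omega_0)\bigr)=-\,\Im\omega_0\cdot k_0$ for every $v\in\mathbb{R}^3$, so the coefficient $k_0\cdot(v-\omega_0)$ never vanishes and in fact $|k_0\cdot(v-\omega_0)|\ge \Im\omega_0\cdot k_0>0$. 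Hence I may normalize $\hat\rho=1$ and \emph{define} $\hat f$ by \eqref{clVM-def}. The uniform lower bound on the denominator together with the $\delta$-condition (which controls $|\nabla_v\mu|$ by $(1+|v|)\mu$, so that $\nabla_v\mu$ inherits the decay/integrability of $\mu$) yields $\hat f\in L^2(\mathbb{R}^3_v)$.

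Next I would close the loop: integrating \eqref{clVM-def} over $v$ gives $\hat\rho=|k_0|^{-2}\int \frac{k_0\cdot\nabla_v\mu}{k_0\cdot(v-\omega_0)}\,dv$, so the normalization $\hat\rho=1$ is consistent exactly when $(k_0,\omega_0)$ satisfies \eqref{Penrose} — this is where the hypothesis is used. It then remains to verify that $f$ is a bona fide growing mode: $\Re\lambda_0=\Re(-ik_0\cdot\omega_0)=\Im(k_0\cdot\omega_0)=\Im\omega_0\cdot k_0>0$; the constraint $\iint g\,dy\,dv=0$ holds because $\int_{\mathbb{T}^3}e^{ik_0\cdot y}\,dy=0$ for $k_0\neq0$; and $g=e^{ik_0\cdot y}\hat f\in L^2(\mathbb{T}^3\times\mathbb{R}^3)$ by the previous paragraph. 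One bookkeeping point I would flag: $k_0$ must be an admissible Fourier frequency on $\mathbb{T}^3$; if the Penrose criterion is only known at some real vector, one reduces to this by choosing $k_0$ in the dual lattice (using that the criterion persists along the segment $\{tk_0:0<t\le1\}$) or by rescaling the torus accordingly.

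For the converse, I would Fourier-expand an arbitrary eigenfunction $g=\sum_k e^{ik\cdot y}\hat g_k(v)\in L^2$ for the eigenvalue $\lambda$. Mode by mode, \eqref{e-insta} gives $\lambda\hat g_0=0$, hence $\hat g_0\equiv0$ since $\lambda\neq0$; and for $k\neq0$, $(\lambda+ik\cdot v)\hat g_k=i|k|^{-2}(k\cdot\nabla_v\mu)\hat\rho_k$ with $\hat\rho_k=\int\hat g_k\,dv$. If $\hat\rho_k=0$ then $(\lambda+ik\cdot v)\hat g_k=0$, and since $\Re(\lambda+ik\cdot v)=\Re\lambda>0$ this forces $\hat g_k=0$. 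Thus on every mode $k$ where $g$ does not vanish we have $\hat\rho_k\neq0$; setting $\omega_k:=i\lambda k/|k|^2$ we get $-ik\cdot\omega_k=\lambda$ and $\Im\omega_k\cdot k=\Re\lambda>0$, so again $k\cdot(v-\omega_k)$ is non-vanishing and dividing shows $\hat g_k$ is a scalar multiple of \eqref{clVM-def}, while integrating forces \eqref{Penrose} at $(k,\omega_k)$. Hence every eigenfunction is a superposition of modes of the form \eqref{e-eigen}–\eqref{clVM-def}, and in particular the Penrose criterion must be satisfied at some $(k,\omega)$ with $-ik\cdot\omega=\lambda$.

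I do not expect a genuine obstacle here: this is a classical separation-of-variables argument in which \eqref{Penrose} plays the role of the Fredholm-type solvability condition. The only points requiring care are (i) the $L^2(\mathbb{R}^3_v)$ membership of $\hat f$, which relies on the denominator being uniformly bounded away from zero — a direct consequence of $\Im\omega_0\cdot k_0>0$ — and on the decay of $\nabla_v\mu$ encoded in the $\delta$-condition, and (ii) the admissibility of $k_0$ as a torus frequency mentioned above.
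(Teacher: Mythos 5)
Your proof is correct and follows the standard separation-of-variables/dispersion-relation derivation; the paper itself only cites the lemma as classical (Penrose, Degond) after sketching exactly the reduction you carry out, so your route is essentially the paper's. The only imprecision is a side remark: the Penrose relation \eqref{Penrose} is an \emph{equality}, so it does not literally "persist along $\{tk_0:0<t\le 1\}$" with $\omega_0$ held fixed (the left-hand side scales like $1/t^2$); the correct device, which you also offer and which the paper adopts via Proposition~\ref{prop:Penlim}, is to rescale the torus so that $k_0$ is an admissible frequency.
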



In particular, it is known, see for instance \cite{GS}, that if there is a vector $e \in \mathbb{S}^2$ such that the function
$$
\mu_e (r) = \int_{r e + e^\perp} \mu(w) \, dw
$$
admits a local strict minimum at a point $\bar r \in \mathbb{R}$ and is symmetric around $\bar r$, and that  the following inequality holds:
\begin{equation} \label{e-Pen}
\int_{\mathbb{R}} \frac{\mu_e(r)- \mu_e(\bar r)} { |r- \bar r|^2 } \, dr >   4 \pi^2,
\end{equation}
then the Penrose instability criterion is satisfied.

\bigskip

In this paper, we shall only consider equilibria that are 
\begin{itemize}
\item \emph{smooth} (i.e. $C^k$, with $k \gg 1$) and  \emph{decaying sufficiently fast at infinity};
\item \emph{normalized} in the sense that $\int_{\mathbb{R}^3} \mu(v) \, dv =1$
and $
 \int_{\mathbb{R}^3}  \mu(v) \hat{v} \, dv =0,
$
for all $c>0$.
\end{itemize}
%
As a result, any such equilibrium gives a stationary solution for all the systems we study in this paper, namely the relativistic Vlasov-Maxwell system \eqref{VM}, the Vlasov-Poisson system \eqref{VP} and the kinetic eMHD system \eqref{kinetic-eMHD}.

We are now in position to state our first result.

\begin{theorem}[Instability in the classical limit]
\label{t-classical}

Let $\mu(v)$ be a smooth, normalized equilibrium that satisfies the $\delta$-condition and the Penrose instability criterion.
Then, for any $m,\S,\SS,p>0$, there exist a 
family of smooth solutions $(f^{\varepsilon}, E^{\varepsilon}, B^{\varepsilon})_{\varepsilon>0}$ of \eqref{clVM}, with $f^\varepsilon\ge 0$, and a sequence of times $s_\varepsilon = \mathcal{O}(|\log \varepsilon|)$ such that 
\begin{equation}
\| (1+| v|^2)^{\frac m2} ({f^{\varepsilon}}_{\vert_{s=0}}- \mu)\|_{H^\S(\mathbb{T}^3\times \mathbb{R}^3)} \le \varepsilon^p,
\end{equation}
but 
\begin{equation}
\label{e-thm1}
\liminf_{\varepsilon \rightarrow 0}  \left\| f^\varepsilon(s_\varepsilon) - \mu\right\|_{H^{-\SS}(\mathbb{T}^3\times \mathbb{R}^3)} >0,
\end{equation}
\begin{equation}
\label{e-thm2}
 \liminf_{\varepsilon \rightarrow 0}\left\| \rho^\varepsilon(s_\varepsilon) - 1 \right\|_{H^{-\SS}(\mathbb{T}^3)} >0,
\quad
\liminf_{\varepsilon \rightarrow 0}  \left\| j^\varepsilon(s_\varepsilon) \right\|_{H^{-\SS}(\mathbb{T}^3)} >0,
\end{equation}
\begin{equation}
\label{e-thm3}
\liminf_{\varepsilon \rightarrow 0}  \left\| E^\varepsilon(s_\varepsilon) \right\|_{L^2(\mathbb{T}^3)} >0
\end{equation}
in which $\rho^\varepsilon, j^\varepsilon$ are defined as in \eqref{def-rhoje}.
\end{theorem}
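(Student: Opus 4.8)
The plan is to follow Grenier's iterative scheme for nonlinear instability, adapted to the singular limit $\varepsilon \to 0$, exactly in the spirit of \cite{G,HKH}. The starting point is Lemma \ref{lem-unmode}, which gives a growing mode $e^{\lambda_0 s}e^{ik_0\cdot y}\hat f(v)$ of the linearized Vlasov-Poisson operator around $\mu$, with $\Re\lambda_0 = \gamma_0 > 0$. The first step is to upgrade this to an \emph{approximate growing mode} of the linearized relativistic Vlasov-Maxwell system \eqref{clVM}: one seeks a finite expansion $\sum_{j=0}^{N} \varepsilon^j (f_j,E_j,B_j) e^{\lambda_0 s}e^{ik_0\cdot y}$ (more precisely a WKB-type ansatz in the spatial variable $y$ with frequency $k_0$), where $(f_0,E_0,B_0)$ is the Vlasov-Poisson eigenmode above, $B_0 = 0$, and the higher-order correctors solve explicit linear transport-type equations driven by the magnetic terms $\varepsilon\hat v\times B^\varepsilon$ and the difference $\hat v - v = O(\varepsilon^2)$. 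Because $\varepsilon$ multiplies every magnetic coupling and the correction to the velocity, each corrector is well-defined (the relevant Penrose-type resolvents are invertible since $\lambda_0$ is a genuine unstable eigenvalue of the limit operator), and one obtains a family $(\tilde f^\varepsilon,\tilde E^\varepsilon,\tilde B^\varepsilon)$ solving \eqref{clVM} up to a source term of size $O(\varepsilon^{N+1})e^{\gamma_0 s}$ in the relevant Sobolev space, with leading growth rate $\gamma_0$. The $\delta$-condition is what lets us build a genuine nonnegative perturbation at the nonlinear level, since $\hat f$ is controlled pointwise by $(1+|v|)\mu$.

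The second step is the nonlinear stability estimate. Write $f^\varepsilon = \mu + \delta \tilde f^\varepsilon e^{\gamma_0 s} + R^\varepsilon$ — or rather, following the bootstrap in \cite{HKH}, initialize with data $\mu + \delta_\varepsilon \tilde f^\varepsilon|_{s=0}$ where $\delta_\varepsilon$ is a small amplitude to be tuned — and derive the equation for the remainder $R^\varepsilon$. The remainder satisfies a forced linearized-plus-quadratic equation; using the energy method for the relativistic Vlasov-Maxwell system together with the semigroup bound on the linearized flow (whose growth is no worse than $e^{(\gamma_0+o(1))s}$, this being where a spectral/resolvent bound on the linearized operator, uniform in $\varepsilon$, is needed), one runs a Gronwall/bootstrap argument: $R^\varepsilon$ stays negligible compared to the principal term on a time interval up to $s_\varepsilon$ defined by the condition that the principal term $\delta_\varepsilon e^{\gamma_0 s_\varepsilon}$ reaches a fixed small threshold $\theta_0$. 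Choosing $\delta_\varepsilon = \varepsilon^q$ for $q$ large (depending on $p$, $m$, $s$, $N$) makes the initial data $\varepsilon^p$-close to $\mu$ in the weighted $H^s$ norm, while $s_\varepsilon = \frac{1}{\gamma_0}\log(\theta_0/\delta_\varepsilon) = \mathcal O(|\log\varepsilon|)$. At time $s_\varepsilon$ the solution is order one away from $\mu$ in the direction of $\tilde f^\varepsilon|_{s=0}$, which has nonzero density $\hat\rho$ and nonzero field $E_0 \neq 0$ at frequency $k_0$; this gives \eqref{e-thm1}, \eqref{e-thm2}, \eqref{e-thm3} after noting that all these quantities are evaluated at the fixed frequency $k_0$ so negative Sobolev norms and $L^2$ norms are all comparable there. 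The passage to negative Sobolev norms for $f^\varepsilon$ is the standard trick: control from above in a strong norm plus saturation at a single frequency forces a lower bound in any $H^{-s'}$.

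The main obstacle I expect is obtaining the \emph{uniform-in-$\varepsilon$ semigroup estimate} for the linearized relativistic Vlasov-Maxwell operator with the correct exponential rate. Unlike Vlasov-Poisson, the Maxwell part introduces a hyperbolic wave structure with characteristic speed $c = 1/\varepsilon \to \infty$, and one must rule out that the electromagnetic dynamics produce spurious growth faster than $e^{\gamma_0 s}$, or that the resolvent bounds degenerate as $\varepsilon\to 0$. The way to handle this, following the philosophy that only an approximate growing mode is needed (as stressed in the introduction), is to avoid a sharp spectral analysis of the $\varepsilon$-dependent operator altogether: instead one works directly with the Duhamel formula for the full nonlinear system, estimating the propagator of the free relativistic transport operator plus Maxwell by energy identities (the Maxwell energy $\int |E^\varepsilon|^2 + |B^\varepsilon|^2$ is conserved up to current forcing), and treating the potentially dangerous $\Delta_y^{-1}$ loss exactly as in the Vlasov-Poisson case since at fixed frequency $k_0$ it is harmless. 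One then only needs the \emph{a priori} bound that the linearized flow grows at most like $e^{\Lambda s}$ for some finite $\Lambda$ (not necessarily $\gamma_0$), which suffices to close the bootstrap provided the approximate mode is constructed to sufficiently high order $N$ so that its error $\varepsilon^{N+1}e^{\Lambda s_\varepsilon}$ remains negligible; this is the standard trade-off in Grenier's scheme and is where the freedom in choosing $N$ and $q$ gets used. A secondary technical point is propagating the polynomial velocity weight $(1+|v|^2)^{m/2}$ through the relativistic transport — this is routine given the fast decay assumed on $\mu$ and the $\delta$-condition, but must be tracked to get the weighted $H^s$ smallness of the initial data.
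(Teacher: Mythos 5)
Your overall architecture (Grenier's scheme: build a high-order approximate growing solution, then close with a Gronwall bootstrap) matches the paper's, but the construction of the approximate solution has a genuine gap. You claim the higher-order correctors are well-defined because ``the relevant Penrose-type resolvents are invertible since $\lambda_0$ is a genuine unstable eigenvalue of the limit operator.'' This is backwards: $\lambda_0$ being an eigenvalue of $L_0$ means precisely that $\lambda_0 - L_0$ has a kernel and is \emph{not} invertible. A pure-exponential ansatz $\sum_j \varepsilon^j f_j\, e^{\lambda_0 s}e^{ik_0\cdot y}$ therefore runs into Fredholm-type solvability conditions at each order, and (absent a shift $\lambda = \lambda_0 + \varepsilon\lambda_1 + \cdots$ tuned to enforce them, which you do not mention) the hierarchy cannot be solved. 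The paper avoids this entirely by \emph{not} using a pure-exponential ansatz: the correctors $g_k$ for $k \ge 2$ solve initial value problems $P(g_k) = (\text{lower-order source})$, ${g_k}|_{s=0}=0$, via Duhamel with the semigroup $e^{L_0 s}$. These $g_k$ grow like $e^{(1+\frac{k-1}{p})\Re\lambda_0 s}$, strictly faster than the leading mode, which is harmless under the $\varepsilon^{p+k-1}$ prefactors. The technical engine for this is a sharp semigroup estimate (Proposition~\ref{prop-exp}), $\|e^{L_0 s}h\|_{H^n_m} \le C_\beta e^{(\Re\lambda_0+\beta)s}\|h\|_{H^{n+2}_{m+2}}$, with an unavoidable loss of two $(y,v)$-derivatives and two velocity moments per application; your proposal has no substitute for it and explicitly tries to sidestep it, which undercuts the iteration.

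A secondary issue: your assertion that ``$\varepsilon$ multiplies every magnetic coupling'' so those couplings are small is not correct in the setting the paper works in. With the vector potential $A$ solved from zero initial data, the wave energy estimate gives $\|\varepsilon\partial_s A\| \lesssim \int_0^s\|f(\tau)\|\,d\tau$, which is of the \emph{same} size as $f$, not $O(\varepsilon)$ smaller; the transient generated by zero data is what kills the naive power counting. The paper's fix is to absorb the dangerous term into the linearized operator through the substitution $g = f - \varepsilon A\cdot\nabla_v\mu$, writing the problem as $P[f - \varepsilon A\cdot\nabla_v\mu] + \varepsilon\tilde S(f) + \varepsilon^2 T(f) + Q(f,f) = 0$, and then proving (Lemma~\ref{lem-gf}) that $f$ and $g$ are comparable, using crucially that the eigenmode has $\int \hat v g_1\,dv\,dy = 0$ so $\langle A_1\rangle = 0$. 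If you commit instead to a pure exponential in time you would dodge the transient, but then your initial data for $A$ and $B$ is $O(\varepsilon)$ rather than zero, and you must still resolve the Fredholm obstruction above. Either way, the mechanism that actually makes the paper's construction work is absent from your proposal.
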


\begin{remark}
Consider a smooth one-dimensional double-bump equilibrium (see \cite{GS}), written as $\mu_1(v_1^2)$ and consider two other 1D radial profiles $\mu_2(v_2^2)$ and $\mu_3(v_3^2)$,
normalized so that
$$
\mu(v_1,v_2,v_3) := \prod_{i=1}^3 \mu_i (v_i^2)
$$
is such that $\int \mu \, dv =1$ and satisfies the $\delta$-condition. Then we note that  for all $c>0$,
$ \int_{\mathbb{R}^3}  \mu(v) \hat{v} \, dv =0$, so that such equilibria
 satisfy all the required assumptions of the theorem.
\end{remark}

\begin{remark}
This  nonlinear instability result for the  full 3D Vlasov-Maxwell dynamics is concerned with the classical limit $c =1/\varepsilon\to +\infty$; it is stronger in terms of admissible norms than those previously known for fixed values of the speed of light 
$c$, see e.g. \cite{GS00}.
\end{remark}

\begin{remark} 
From a view of our analysis, one can also extract a nonlinear instability result for the limiting Vlasov-Poisson equation. This result yields a stronger instability in much weaker Sobolev norms than the one obtained by Guo and Strauss in \cite{GS}, however with the additional assumption that the equilibrium $\mu$ is sufficiently smooth. It is the exact analogue in higher dimension of the 1D result proved by the first author and Hauray \cite[Theorem 3.1]{HKH}.
\end{remark}

Recalling the hyperbolic change of variables~\eqref{e-hyp}, Theorem~\ref{t-classical} will allow us to prove short time instability of the Vlasov-Maxwell system \eqref{VM}.
In this context, we can first introduce the following \emph{sharp} Penrose instability condition, ensuring instability of equilibria in the quasineutral limit.

\begin{definition}[Sharp Penrose instability condition]
\label{d-penrose}
We say that a profile $\mu(v)$ satisfies the sharp Penrose instability 
condition if there is a vector $e \in \mathbb{S}^2$ such that the function
$
\mu_e (r) = \int_{r e + e^\perp} \mu(w) \, dw
$
admits a local minimum at the point $\bar r$ and  the following inequality holds 
\begin{equation} \label{e-Penrose}
\int_{\mathbb{R}} \frac{\mu_e(r)- \mu_e(\bar r)} { |r- \bar r|^2 } \, dr >   0.
\end{equation}
If the local minimum is flat, i.e. is reached on an interval $[\bar r_1, \bar r_2]$, then~\eqref{e-Penrose} has to be satisfied for all $\bar r \in [\bar r_1, \bar r_2]$.
\end{definition}
The sharp Penrose instability condition does not directly yield a growing mode for the Vlasov-Poisson equations set on $\mathbb{T}^3 \times \mathbb{R}^3$; however the instability appears in the regime of small $\varepsilon$.

%

Our second main result reads as follows.

\begin{theorem}[Invalidity of the quasineutral limit]
\label{t-quasineutral}
Let $\mu(v)$ be a smooth, normalized equilibrium satisfying the $\delta$-condition, decaying sufficiently fast at infinity, and satisfying the sharp Penrose instability condition.
Then, for any $m,S,p>0$, there exist a 
family of smooth solutions $(f_{\varepsilon}, E_{\varepsilon}, B_{\varepsilon})_{\varepsilon>0}$ of \eqref{VM}, with $f_\varepsilon\ge 0$ and a sequence of times $t_\varepsilon = \mathcal{O}(\varepsilon |\log \varepsilon|)\to 0$,  such that 
\begin{equation}
\| (1+| v|^2)^{\frac m2}({f_{\varepsilon}}_{\vert_{t=0}}- \mu)\|_{H^S(\mathbb{T}^3\times \mathbb{R}^3)} \leq \varepsilon^p,
\end{equation}
but 
\begin{equation}
\liminf_{\varepsilon \rightarrow 0}  \left\| f_\varepsilon(t_\varepsilon) - \mu\right\|_{L^2(\mathbb{T}^3\times \mathbb{R}^3)} >0,
\end{equation}
\begin{equation}
\liminf_{\varepsilon \rightarrow 0}  \left\| \rho_\varepsilon(t_\varepsilon) - 1 \right\|_{L^2(\mathbb{T}^3)} >0,
\quad
\liminf_{\varepsilon \rightarrow 0}  \left\| j_\varepsilon(t_\varepsilon) \right\|_{L^2(\mathbb{T}^3)} >0,
\end{equation}
\begin{equation}
\liminf_{\varepsilon \rightarrow 0}   \varepsilon \left\| E_\varepsilon(t_\varepsilon) \right\|_{L^2(\mathbb{T}^3)} >0,
\end{equation}
in which $\rho_\varepsilon, j_\varepsilon$ are defined as in \eqref{def-rhoje}.
\end{theorem}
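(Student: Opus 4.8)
The plan is to deduce Theorem~\ref{t-quasineutral} from Theorem~\ref{t-classical} via the hyperbolic rescaling \eqref{e-hyp}, exactly as announced in the introduction. First I would record that $(f,E,B)$ solves \eqref{VM} (with $\alpha=1$) on $\mathbb{T}^3\times\mathbb{R}^3$ if and only if $(f^\varepsilon,E^\varepsilon,B^\varepsilon):=(f,\varepsilon E,B)$, read in the variables $(s,y,v)=(t/\varepsilon,x/\varepsilon,v)$, solves \eqref{clVM} — but now posed on the torus $\tfrac1\varepsilon\mathbb{T}^3=\mathbb{R}^3/\tfrac1\varepsilon\mathbb{Z}^3$ rather than on $\mathbb{T}^3$. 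So the first genuine issue is that Theorem~\ref{t-classical} is stated on the fixed torus $\mathbb{T}^3$: one must either re-prove it on $\tfrac1\varepsilon\mathbb{T}^3$ or, more economically, build the solution of \eqref{clVM} concentrated on a single Fourier mode $e^{ik_0\cdot y}$ with $k_0\in 2\pi\mathbb{Z}^3$ — the integer lattice embeds into $2\pi\varepsilon\mathbb{Z}^3$ after rescaling — so the construction of Theorem~\ref{t-classical} (which is driven by the single unstable mode from Lemma~\ref{lem-unmode}) transfers verbatim to the large torus with $k_0$ replaced by $\varepsilon k_0$. The catch is that the Penrose criterion \eqref{Penrose} is a constraint on the \emph{pair} $(k_0,\omega_0)$, and for small $\varepsilon$ the available wavenumbers $\varepsilon k_0$ are small; this is precisely why one needs the \emph{sharp} Penrose condition (Definition~\ref{d-penrose}) rather than \eqref{e-Pen}: I would invoke the standard fact (cf.\ the discussion after Lemma~\ref{lem-unmode} and \cite{GS,HKH}) that under \eqref{e-Penrose} the dispersion relation \eqref{Penrose} has a root $(k,\omega(k))$ with $\Im(\omega\cdot k)>0$ for all sufficiently small $|k|$, and that the growth rate $\Re\lambda(k)=|k|\,\Im\omega\cdot(k/|k|)$ stays bounded away from $0$ relative to $|k|$ — more precisely $\Re\lambda(\varepsilon k_0)\sim \varepsilon$, so the instability timescale in $s$ is $|\log\varepsilon|/\varepsilon$ wait, let me recompute: the perturbation amplitude is $\sim\varepsilon^p e^{\Re\lambda\, s}$, and $\Re\lambda\sim \varepsilon$, so $s_\varepsilon\sim \varepsilon^{-1}|\log\varepsilon|$, hence $t_\varepsilon=\varepsilon s_\varepsilon\sim|\log\varepsilon|$, which is \emph{not} $o(1)$. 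The fix — and this is the reason the statement of Theorem~\ref{t-quasineutral} claims $t_\varepsilon=\mathcal O(\varepsilon|\log\varepsilon|)$ — is to keep $k_0$ \emph{fixed} in the $(s,y)$-picture, i.e.\ to take the spatial frequency in $x$ to be $k_0/\varepsilon$: then in \eqref{clVM} the relevant mode is genuinely $e^{ik_0\cdot y}$ with $k_0$ fixed, $\Re\lambda_0>0$ fixed, $s_\varepsilon=\mathcal O(|\log\varepsilon|)$ from Theorem~\ref{t-classical}, and therefore $t_\varepsilon=\varepsilon s_\varepsilon=\mathcal O(\varepsilon|\log\varepsilon|)\to0$.

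Concretely the steps are: (i) Fix $e\in\mathbb S^2$ and $\bar s$ from the sharp Penrose condition; choose $k_0$ parallel to $e$ with $|k_0|$ small enough that \eqref{Penrose} admits a root $\omega_0$ with $\Im\omega_0\cdot k_0>0$ (this is where Definition~\ref{d-penrose} is used, replacing the stronger \eqref{e-Pen} needed for frequencies of order one), and normalize $k_0\in2\pi\mathbb{Z}^3$ so that $e^{ik_0\cdot y}$ is periodic on $\mathbb{T}^3$. (ii) Apply (the construction underlying) Theorem~\ref{t-classical} with this $k_0$ to produce $(f^\varepsilon,E^\varepsilon,B^\varepsilon)$ solving \eqref{clVM} on $\mathbb{T}^3\times\mathbb{R}^3$, with $f^\varepsilon\ge0$, initially within $\varepsilon^{p'}$ of $\mu$ in the weighted $H^s$ norm, and with $s_\varepsilon=\mathcal O(|\log\varepsilon|)$ such that the lower bounds \eqref{e-thm1}--\eqref{e-thm3} hold; here I would take the exponent $p'$ slightly larger than $p$ to absorb the Jacobian/weight losses incurred below. (iii) Undo the rescaling: set $f_\varepsilon(t,x,v):=f^\varepsilon(t/\varepsilon,x/\varepsilon,v)$, $E_\varepsilon:=\tfrac1\varepsilon E^\varepsilon(\cdot/\varepsilon)$, $B_\varepsilon:=B^\varepsilon(\cdot/\varepsilon)$, which solves \eqref{VM} on $\mathbb{T}^3\times\mathbb{R}^3$ (the periodicity on $\mathbb{T}^3$ is preserved precisely because $e^{ik_0\cdot x/\varepsilon}=e^{i(k_0/\varepsilon)\cdot x}$ and, along the discrete sequence $\varepsilon=\varepsilon_n$ chosen so that $k_0/\varepsilon_n\in2\pi\mathbb{Z}^3$, this is $\mathbb{T}^3$-periodic — I would restrict to such a sequence, which suffices for the $\liminf$ statements). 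Set $t_\varepsilon:=\varepsilon s_\varepsilon=\mathcal O(\varepsilon|\log\varepsilon|)\to0$.

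The last step is to convert the norm bounds. For the initial data: $\|(1+|v|^2)^{m/2}(f_\varepsilon|_{t=0}-\mu)\|_{H^s(\mathbb{T}^3\times\mathbb{R}^3)}$ differs from the corresponding $(s,y)$-norm only through $x$-derivatives, each of which produces a factor $1/\varepsilon$ — so we lose at most $\varepsilon^{-s}$, which is beaten by choosing $p'=p+s$ in Theorem~\ref{t-classical}. For the conclusions at time $t_\varepsilon$: since we pass from $H^{-s'}$ (negative order, in the $y$ variable) back to $H^0=L^2$ in $x$, and since integration over $\mathbb{T}^3$ versus $\tfrac1\varepsilon\mathbb{T}^3$ introduces only $\varepsilon$-power Jacobian factors that are harmless for a single fixed Fourier mode, the lower bounds \eqref{e-thm1}--\eqref{e-thm3} transfer to lower bounds on $\|f_\varepsilon(t_\varepsilon)-\mu\|_{L^2}$, $\|\rho_\varepsilon(t_\varepsilon)-1\|_{L^2}$, $\|j_\varepsilon(t_\varepsilon)\|_{L^2}$, and $\varepsilon\|E_\varepsilon(t_\varepsilon)\|_{L^2}=\|E^\varepsilon(s_\varepsilon)\|_{L^2}$ — here it is essential that the data concentrate on one mode so that the $L^2$ mass is not diluted by the rescaling. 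I expect the main obstacle to be the first point: verifying that the sharp Penrose condition \eqref{e-Penrose}, as opposed to the quantitative \eqref{e-Pen}, still delivers a genuine growing mode of \eqref{e-insta}--\eqref{Penrose} at some admissible (small) integer-after-rescaling wavenumber $k_0$, together with enough regularity and decay of the eigenfunction $\hat f$ in \eqref{clVM-def} to run the weighted Sobolev estimates of Theorem~\ref{t-classical}; this is a careful but essentially known analysis of the Penrose dispersion function near $k=0$ (compare \cite{GS,HKH}), and everything else is bookkeeping of $\varepsilon$-powers under the scaling \eqref{e-hyp}.
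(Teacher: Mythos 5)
Your overall plan (rescale by \eqref{e-hyp} to reduce to the classical limit, apply Theorem~\ref{t-classical}, undo the scaling, keep track of $\varepsilon$-powers) is the same as the paper's, and your identification of where the sharp Penrose condition enters — instability only at small spatial wavenumbers — is correct. You also correctly diagnose that a naive application with wavenumber $\varepsilon k_0$ in the $(s,y)$-picture gives growth rate $\mathcal{O}(\varepsilon)$ and hence the wrong timescale, so that $k_0$ must be kept of order one in the rescaled variables.

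However, there is a genuine gap at exactly the point you flag as ``the main obstacle.'' You try to stay on the unit torus $\mathbb{T}^3$ in the $(s,y)$-variables and ``normalize $k_0\in 2\pi\mathbb{Z}^3$.'' These two requirements are incompatible: any nonzero $k_0\in 2\pi\mathbb{Z}^3$ has $|k_0|\ge 2\pi$, while the sharp Penrose condition~\eqref{e-Penrose} only produces a root of~\eqref{Penrose} for $|k_0|$ \emph{small} (when $\int\frac{\mu_e(s)-\mu_e(\bar s)}{|s-\bar s|^2}ds\le 4\pi^2$, there is simply no unstable mode with $|k_0|\ge 2\pi$). The resolution used in the paper is the option you discard as uneconomical: one must run the Grenier construction of Section~\ref{s-classical} on a \emph{fixed enlarged torus} $\mathbb{T}^3_M\times\mathbb{R}^3$, with $M$ chosen once and for all (independent of $\varepsilon$) by Proposition~\ref{prop:Penlim} so that the admissible lattice $\frac{2\pi}{M}\mathbb{Z}^3$ contains a Penrose-unstable wavenumber. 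Then $k_0\in\frac{2\pi}{M}\mathbb{Z}^3$ is fixed with $\Re\lambda_0>0$ fixed, the $(t,x)$-solution becomes $\varepsilon M$-periodic, and one restricts to the subsequence $\varepsilon_k=\frac{1}{kM}$ and patches $(\varepsilon_k M)^{-3}$ shifted copies to produce a genuine solution on $\mathbb{T}^3$. Your remark about restricting to a discrete sequence of $\varepsilon$'s is the same device, but without the intermediate torus $\mathbb{T}^3_M$ it has no valid input: Theorem~\ref{t-classical} as stated gives you nothing to rescale, because its hypothesis (Penrose criterion on $\mathbb{T}^3$) is strictly stronger than the sharp Penrose condition assumed in Theorem~\ref{t-quasineutral}. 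Apart from this, your bookkeeping of the $\varepsilon^{-s}$ Jacobian loss on the $H^s$ initial-data norm (absorbed by taking the exponent $p'$ in Theorem~\ref{t-classical} suitably larger) and of the scale-invariance of the $L^2$ lower bounds is in line with~\eqref{e-back}.
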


The following of this paper is dedicated to the proofs of Theorem~\ref{t-classical} and~\ref{t-quasineutral}.

\section{Instability in the classical limit}
\label{s-classical}

We study the instability of Vlasov-Maxwell systems in the classical limit $\varepsilon \to 0$: 
\begin{equation}\label{clVM1} 
\left \{ \begin{aligned}
\partial_s f^\varepsilon + \hat{v} \cdot \nabla_y f^\varepsilon + (E^\varepsilon + \varepsilon \hat{v} \times B^\varepsilon)\cdot \nabla_v f^\varepsilon & =0,
\\
\varepsilon \partial_s B^\varepsilon + \nabla_y \times E^\varepsilon = 0, \qquad \nabla_y \cdot E^\varepsilon  &= \rho^\varepsilon - 1,
\\
- \varepsilon \partial_s E^\varepsilon + \nabla_y \times B^\varepsilon = \varepsilon j, \qquad \nabla_y \cdot B^\varepsilon & =0, \end{aligned}
\right.
\end{equation}
where $$
\begin{aligned}
\hat{v} &= \frac{v}{\sqrt{1+  {\varepsilon^2 |v|^2}}}, \qquad \rho^\varepsilon (t,x)= \int_{\mathbb{R}^3} f^\varepsilon \; dv, 
\\ j^\varepsilon (t,x) &= \int_{\mathbb{R}^3}\frac{v}{\sqrt{1+  {\varepsilon^2 |v|^2}}} f^\varepsilon\; dv. 
\end{aligned}
$$
Let $\mu(v)$ be a smooth, normalized equilibrium of \eqref{clVM1}. 
We set $f^\varepsilon = \mu + f$, $E^\varepsilon = E$ and $B^\varepsilon = B$. The perturbation $(f,E,B)$ solves 
\begin{equation}\label{clVM-pert}
\partial_s f + \hat{v} \cdot \nabla_y f + (E + \varepsilon \hat{v} \times B)\cdot \nabla_v (\mu + f) = 0.
\end{equation}
The fields $E$ and $B$ are constructed through the electromagnetic potentials: 
\begin{equation}\label{potentials-cl}
 E = - \nabla \phi - \varepsilon \partial_s A, \qquad B = \nabla \times A,\end{equation}
 with $A$ satisfying the \emph{Coulomb gauge}
 $$
 \nabla \cdot A = 0.
 $$
The scalar and vector potentials $\phi, A$ solve
\begin{equation}\label{ellip-pert} 
-\Delta \phi= \rho(f), \qquad \varepsilon^2 \partial_s^2 A - \Delta A= \varepsilon j(f)- \varepsilon \partial_s \nabla \phi,\end{equation}
where we set 
$$\rho (f) : = \int f(s,y,v)\; dv, \quad j (f) : = \int \hat{v} f(s,y,v)\; dv.$$

As discussed in the introduction, the instability comes from that of the underlying Vlasov-Poisson system. In other words, we rely on a growing mode of the linearization of Vlasov-Poisson around $\mu$ in order to build an approximate growing solution to the nonlinear perturbation systems \eqref{clVM-pert}-\eqref{ellip-pert}. 

Let us start by introducing
the linearized Vlasov-Poisson operator, acting on functions $f$ with zero mean:
\begin{equation}\label{def-VP} P f: = \partial_s f - L_0 f, \quad L_0 f := - v \cdot \nabla_y f  - \nabla_v \mu \cdot \nabla_y \Delta_y^{-1} \rho (f). \end{equation}
Note that $\rho (f)  = \int f(s,y,v)\; dv$ is a function of $(s,y)$ with zero mean in $y$.

We denote by $H ^k$ the usual Sobolev space of functions in $y$ over $\mathbb{T}^3$ (or in $v$ over $\mathbb{R}^3$) with all partial derivatives up to order $k$ having finite $L^2$ norms, and denote by $H_m^{n}$ the function space consisting of functions in $y$ and $v$ so that the norm 
$$ \| f\|_{H^{n}_m}: = \sum_{|\alpha| +|\beta| \leq n}\|  \langle v \rangle^{m} \partial_y^\alpha  \partial_v^\beta f\|_{L^2}  $$
is finite, with $\langle v \rangle: = \sqrt {1+|v|^2}$ and $m,n\ge 0$. 

Defining the domain of $L_0$ as
$$
D(L_0) = \left\{ f \in H^{n}_m, \, L_0 f \in H^{n}_m, \, \iint f \, dv dy =0 \right\},
$$
where $n,m$ are large enough, we recall (see for instance \cite{Degond}) that the unstable spectrum is made only of point spectrum. Moreover,
 there is at least one unstable eigenvalue associated to the largest positive real part $\Re \lambda$ among all elements of the spectrum; we pick one, denote it by $\lambda_0$, and refer to it as the maximal unstable eigenvalue.

\subsection{Grenier's iterative scheme}


We shall construct an approximate solution $f_\mathrm{app}$ to the nonlinear problem \eqref{clVM-pert}, following the methodology introduced by Grenier \cite{G} for the study of instability of boundary layers in the inviscid limit of the Navier-Stokes equations.
In view of the nonlinear equation \eqref{clVM-pert} and of the linearized Vlasov-Poisson operator~\eqref{def-VP}, we introduce 
$$
\begin{aligned}
S(f)&:= -  [\partial_s A(f) - \hat v \times (\nabla \times A(f))]\cdot \nabla_v  \mu, \\
T(f) &:= - \frac{ |v|^2}{\sqrt{1+ \varepsilon^2 |v|^2}(1+ \sqrt{1+ \varepsilon^2 |v|^2})} v\cdot \nabla_y f,
\\
 Q(f, g)&: =  - \nabla \phi(f) \cdot \nabla_v g  - \varepsilon [\partial_s A(f) - \hat v \times (\nabla \times A(f))]\cdot \nabla_v g,
 \end{aligned}$$
in which the potentials $(\phi(f), A(f))$ solve the elliptic and wave problem \eqref{ellip-pert} with the source associated to $f$. In what follows, the wave equation is solved with zero initial data: 
$$
A_{\vert_{s=0}} = \partial_s A_{\vert_{s=0}} = 0.$$ 
Finding a solution to the nonlinear problem \eqref{clVM-pert} is equivalent to solving the following symbolic equation: 
 \begin{equation}\label{def-Rf} R(f) : = Pf  + \varepsilon S(f) + \varepsilon^2 T(f) + Q(f,f) = 0.
\end{equation}
We first point out that, as it will become clear from estimate~\eqref{A-est},  the term $\varepsilon \partial_s A$ appearing in $\varepsilon S(f)$ is not small as compared to $f$, and will not be treated as a perturbation. It turns out that we can extract the leading part of $\varepsilon \partial_s A$ into the Vlasov-Poisson operator (see \cite[Section 2]{NStr}, for a similar use of this idea). 
Indeed, using the definition of $P(f)$, we write 
$$\begin{aligned}
S(f) &= -  [\partial_s A - \hat v \times (\nabla \times A)]\cdot \nabla_v  \mu 
\\
&= - P(A\cdot \nabla_v \mu) + \hat v\cdot \nabla (A\cdot \nabla_v \mu) + \nabla_v \mu \cdot \nabla \Delta^{-1} (\rho (A\cdot \nabla_v \mu)) + [\hat v \times (\nabla \times A)]\cdot \nabla_v  \mu 
\\
&= - P(A\cdot \nabla_v \mu) +\tilde S(f),
\end{aligned}$$
in which $ \rho (A\cdot \nabla_v \mu) = 0$ by definition, and we have set 
\begin{equation}\label{def-tS}
\tilde S(f) :=  (\hat v\cdot \nabla) A\cdot \nabla_v \mu + [\hat v \times (\nabla \times A)]\cdot \nabla_v  \mu ,
\end{equation}
Thus, the problem \eqref{def-Rf} is equivalent to 
\begin{equation}\label{Rf-new}  R(f) = P[f -  \varepsilon A \cdot \nabla_v \mu] + \varepsilon  \tilde S(f) + \varepsilon^2 T(f) +  Q(f, f) = 0.
\end{equation}


It is now straightforward to (formally) construct an approximate solution $f_\mathrm{app}$ so that the error $R(f_\mathrm{app})$ is arbitrarily small. We start the construction with 
\begin{equation}\label{gmode}(g_1, \phi_1) =  e^{\lambda_0 s} (\hat g_1, \hat \phi_1)(y,v)
\end{equation}
 to be a growing mode associated to the maximal unstable eigenvalue $\lambda_0$, constructed as in Lemma \ref{lem-unmode}; we choose $(\hat g_1, \hat \phi_1)$  so that 
\begin{equation}
\label{e-g1}
 \hat{g}_1(y,v) = r_1 e^{i k_0 \cdot y} \,   \frac{1}{|k_0|^2} \frac{ \nabla_v \mu \cdot k_0}{k_0\cdot (v-\omega_0)} 
\end{equation} 
 with $r_1>0$ large enough in order to ensure
\begin{equation}
\label{e-theta}
\| \hat g_1 \|_{H^{-\SS}} \geq 2 \theta_0,
\end{equation} 
for $S'>0$ as in Theorem~\ref{t-classical}, and some $\theta_0>2$. 
By construction, $(g_1, \phi_1)$ solves the linearized Vlasov-Poisson system: 
$$P(g_1)=0, \qquad  \phi_1 := -\Delta^{-1} \rho(g_1).
$$
We assume for the moment that $(g_1, \phi_1)$ is real and shall explain later how to deal with the general case.

We then solve for $(f_1, A_1)$ satisfying 
\begin{equation}\label{g-to-f} f_1 - \varepsilon A_1 \cdot \nabla_v \mu = g_1, \qquad \varepsilon^2 \partial_s^2 A_1 - \Delta A_1 = \varepsilon(j(f_1) -\partial_s \nabla \phi_1), \qquad {A_1}_{\vert_{s=0}} = \partial_s {A_1}_{\vert_{s=0}} = 0.\end{equation}
We shall justify later (see Lemma~\ref{lem-gf}) why this system indeed has a solution.  

\bigskip

Let $p \in \mathbb{N} \setminus \{0,1\}$.
Observe that $ \varepsilon^p  f_1$ approximately solves the nonlinear equation \eqref{Rf-new}, leaving an error 
$$ R(\varepsilon^p  f_1): =   \varepsilon^{p+1} \tilde S(f_1)  + \varepsilon^{2p} Q(f_1, f_1) + \varepsilon^{p+2} T(f_1),$$
which formally is of order $\mathcal{O}(\varepsilon^{p+1})$. To obtain an error with higher order,  we introduce $(f_2, \phi_2, A_2)$ so that $(g_2, \phi_2)$, with $g_2 = f_2 - \varepsilon A_2 \cdot \nabla_v \mu$, solve
\begin{equation}\label{lin-f2} 
\begin{aligned}
P(g_2) =  - \tilde S(f_1), \qquad \phi_2 = -\Delta^{-1} \rho(g_2),\qquad {g_2}_{\vert_{s=0}} = 0, \\
\varepsilon^2 \partial_s^2 A_2 - \Delta A_2 = \varepsilon(j(f_2) -\partial_s \nabla \phi_2), \qquad {A_2}_{\vert_{s=0}} = \partial_s {A_2}_{\vert_{s=0}} = 0.
\end{aligned}
 \end{equation}
It follows directly that $\varepsilon^p f_1 + \varepsilon^{p+1} f_2$ approximately solves the nonlinear equation \eqref{Rf-new}, with a better error: 
$$ 
\begin{aligned}
R(\varepsilon^p f_1+ \varepsilon^{p+1} f_2)&:=  \varepsilon^{p+2}\Big[ [S(\tilde f_2) + T(f_1)\Big] + \varepsilon^{p+3}  T(f_2)   \\
&\qquad + \varepsilon^{2p} Q(f_1, f_1) +  \varepsilon^{2p+1} \Big[ Q(f_1, f_2) + Q(f_2, f_1)]\Big]  +  \varepsilon^{2p+2}  Q(f_2,f_2).
\end{aligned}
$$
Inductively, we construct 
\begin{equation}\label{f-app} f_\mathrm{app} =  \sum_{k=1}^N \varepsilon^{p+k-1} f_k,\end{equation}
in which $(f_k, \phi_k, A_k)$, $k\ge 3$, are defined as the unique solution to the linear problems: 
\begin{equation}\label{lin-fk} 
\left\{ \begin{aligned}
P(g_k) &=-\tilde S(f_{k-1})  - T(f_{k-2})- \sum_{\ell=1}^{k-1} Q(f_\ell, f_{k+1-p-\ell}), \qquad \phi_k = -\Delta^{-1} \rho(g_k),\qquad {g_k}_{\vert_{s=0}} = 0
\\
f_k &- \varepsilon A_k \cdot \nabla_v \mu = g_k , \qquad \varepsilon^2 \partial_s^2 A_k - \Delta A_k = \varepsilon(j(f_k) -\partial_s \nabla \phi_k), \qquad {A_k}_{\vert_{s=0}} = \partial_s {A_k}_{\vert_{s=0}} = 0,
\end{aligned}\right.
\end{equation}
with the convention that $f_{-j}= 0$ for $j \in \mathbb{N}$.
The error of this approximation can be computed as 
$$ R(f_\mathrm{app}) = - \varepsilon^{N+p} (\tilde S(f_N) + T(f_{N-1}) )- \varepsilon^{N+p+1} T(f_{N})  -\sum_{k + \ell > N+1-p ; \, 1\le k,\ell\le N-1} \varepsilon^{2p + k + \ell -2} Q(f_k,f_\ell),$$
which is of order $\mathcal{O}(\varepsilon^{N+p})$ or higher. 


In the following subsections, we shall derive relevant estimates on each $f_k$ in the approximate solution $f_\mathrm{app}$ and deduce appropriate bounds on the approximation. In the proof,  $C$ (with various subscripts) 
shall always refer to a positive constant which can change from line to line but does not depend on $\varepsilon$.

\subsection{Linear estimates}
In this section, we obtain bounds on the profile solutions $f_k$, solving \eqref{g-to-f} and \eqref{lin-fk}. We start by studying the linear semigroup $e^{L_0s} $. 
We have the following sharp semigroup bounds with losses of derivatives and integrability in $v$. Here,  sharp 
refers to the growth in time, in the sense that the optimal growth one could hope for would be in $e^{\Re \lambda_0 t}$ and we reach $e^{(\Re \lambda_0+\beta) t}$ for all $\beta>0$.

\begin{proposition}[Sharp bounds on the solution operator]\label{prop-exp} Let $\mu(v)$ be a smooth unstable equilibrium of Vlasov-Poisson system which decays sufficiently fast as $v \to \infty$, and let $\lambda_0$ be the maximal unstable eigenvalue. Let $n, r\ge 0, m\ge 2,$ and $h$ in $H_{m+2}^{n+2}
$. Then, $f = e^{L_0s} h$ is well-defined as the solution of the linearized Vlasov-Poisson problem $P(f) = (\partial_s - L_0) f =0$ with the initial data $h$. Furthermore, there holds 
 \begin{equation}
\label{bound-exp}  
\|  e^{L_0s} h \|_{H_m^{n} } \le C_\beta e^{(\Re \lambda_0+\beta) s} \|  h\|_{H_{m+2}^{n+2} }, \qquad \forall s\ge 0, \quad \forall \beta>0,
\end{equation}
 for some constant $C_\beta$ depending on $\mu$ and $\beta$. 
 \end{proposition}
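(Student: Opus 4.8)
The plan is to obtain the semigroup bound by combining the spectral information on $L_0$ with an explicit solution of the linearized Vlasov-Poisson equation via the method of characteristics (free transport), treating the self-consistent force term as a source through Duhamel's formula. The key technical point is that the force term $\nabla_v\mu\cdot\nabla_y\Delta_y^{-1}\rho(f)$ involves only the macroscopic density $\rho(f)$, which closes on a scalar equation in $(s,y)$; this is why only a loss of two $v$-derivatives (and two $x$-derivatives) is needed, despite $L_0$ being a first-order transport operator with unbounded coefficient $v$.

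First I would set up the resolvent/Laplace-transform representation $e^{L_0 s} = \frac{1}{2\pi i}\int_{\Gamma} e^{zs}(z-L_0)^{-1}\,dz$ along a contour $\Gamma$ lying in $\{\Re z = \Re\lambda_0 + \beta\}$, and analyze $(z-L_0)^{-1}$ by Fourier transform in $y$: for each mode $k\in\mathbb{Z}^3$, one solves $(z + ik\cdot v)\hat f_k(v) + \frac{ik\cdot\nabla_v\mu}{|k|^2}\hat\rho_k = \hat h_k(v)$, so $\hat f_k(v) = \frac{\hat h_k(v)}{z+ik\cdot v} - \frac{ik\cdot\nabla_v\mu}{|k|^2(z+ik\cdot v)}\hat\rho_k$, and integrating in $v$ gives a scalar equation for $\hat\rho_k$: $\bigl(1 + \mathcal{D}(z,k)\bigr)\hat\rho_k = \int \frac{\hat h_k(v)}{z+ik\cdot v}\,dv$, where $\mathcal{D}(z,k) = \frac{1}{|k|^2}\int \frac{ik\cdot\nabla_v\mu}{z+ik\cdot v}\,dv$. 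The Penrose condition and the definition of $\lambda_0$ as the maximal unstable eigenvalue ensure that $1 + \mathcal{D}(z,k)$ is bounded away from zero uniformly in $k$ for $\Re z > \Re\lambda_0$ (on the relevant part of the contour), which controls $\hat\rho_k$ in terms of $\hat h_k$. The factors $\frac{1}{z+ik\cdot v}$ are handled by writing $\frac{1}{z+ik\cdot v} = \frac{1}{z}\bigl(1 - \frac{ik\cdot v}{z+ik\cdot v}\bigr)$ or, better, by integrating by parts in $v$ along the direction $k$ to trade the singular resolvent denominator for derivatives on $\hat h_k$ and on $\nabla_v\mu$; this is the mechanism producing the loss of two $v$-derivatives and the need for the weight $\langle v\rangle^{m+2}$ (one power of $\langle v\rangle$ is lost per integration by parts because $\partial_v$ hitting the weight produces $\langle v\rangle^{-1}$, while the gain $\langle v\rangle^{-1}$ from $1/(z+ik\cdot v)$ after IBP needs the extra margin). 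Summing over $k$ with the weights $|k|^{2n}$ costs two $x$-derivatives because of the $|k|^{-2}$ in $\Delta_y^{-1}$ balanced against the $ik$ in $\nabla_y$.

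An alternative, and probably cleaner, route I would actually write up: use Duhamel against the free transport semigroup $e^{-sv\cdot\nabla_y}$, giving $f(s) = e^{-sv\cdot\nabla_y}h - \int_0^s e^{-(s-\tau)v\cdot\nabla_y}\bigl(\nabla_v\mu\cdot\nabla_y\Delta_y^{-1}\rho(f)(\tau)\bigr)\,d\tau$. Taking $\rho$ (integrating in $v$) yields a Volterra equation for $\rho(f)(s,y)$ whose kernel decays (by the smoothness and decay of $\mu$, the $v$-integral of $e^{-(s-\tau)v\cdot\nabla_y}\nabla_v\mu$ in Fourier is controlled), and the growth rate of its resolvent is exactly governed by $\Re\lambda_0$; then feed $\rho(f)$ back into the Duhamel formula. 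The derivative and weight losses appear at the step where one estimates $\|\langle v\rangle^m\partial_y^\alpha\partial_v^\beta\int_0^s e^{-(s-\tau)v\cdot\nabla_y}(\cdots)d\tau\|_{L^2}$: the $v$-derivatives hitting the transport phase produce factors $(s-\tau)|k|$, which must be absorbed either by the decay of $\hat\mu$ (hence decay and smoothness assumptions on $\mu$) or by the exponential weight $e^{\beta s}$ (this is why $\beta>0$ is needed and why the constant $C_\beta$ blows up as $\beta\to 0$).

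The main obstacle I anticipate is the uniform-in-$k$ control of the Volterra resolvent — equivalently, showing $\inf_{k\in\mathbb{Z}^3,\ \Re z = \Re\lambda_0+\beta}|1+\mathcal{D}(z,k)| > 0$ and good decay of $|\mathcal{D}(z,k)|$ as $|k|\to\infty$ or $|\Im z|\to\infty$, so that the contour integral converges and produces the stated exponential rate with no polynomial-in-$s$ prefactor. This requires the standard Penrose-type analysis (as in \cite{pen,Degond}): $\mathcal{D}(z,k)\to 0$ as $|z|\to\infty$ on the contour, $\mathcal{D}$ is analytic in $\Re z > 0$, and the only zeros of $1+\mathcal{D}$ in $\Re z \geq \Re\lambda_0 + \beta$ are absent by maximality of $\lambda_0$; combined with a compactness argument in $k$ (large $k$ handled by decay of $\mathcal{D}$, small $k$ — here $|k|\geq 1$ since $y\in\mathbb{T}^3$ — by continuity and absence of zeros) this gives the uniform lower bound. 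Once that is in hand, bookkeeping the derivative/weight losses is routine but must be done carefully to land exactly on $H^{n+2}_{m+2}\to H^n_m$ with $m\geq 2$.
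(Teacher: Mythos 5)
Your strategy --- Fourier in $y$, solve the resolvent mode by mode, control the scalar density via the Penrose dispersion function $\mathcal{D}(z,k)$, and invert the Laplace transform along $\Re z=\Re\lambda_0+\beta$ --- is a genuinely different route from the paper's. The paper never Fourier-analyzes and never writes down $\mathcal{D}$: it derives weighted $L^2$ resolvent bounds in physical space directly from the equation $(\lambda-L_0)f=h$ by energy estimates (with the commutator $[\partial_v^\beta, v\cdot\nabla_y]$ for higher derivatives), invokes Hille--Yosida, and then controls the tails of the contour integral via the abstract identity $(\lambda-L_0)^{-1}h=\lambda^{-1}(\lambda-L_0)^{-1}L_0h+\lambda^{-1}h$, using it once together with the resolvent decay $\|(\lambda-L_0)^{-1}\|_{H^{n+1}_{m+1}\to H^n_m}\lesssim |\Im\lambda|^{-1}$ to get the integrable $|\Im\lambda|^{-2}$ decay at the cost of exactly one weight/derivative from $L_0$ and one more from the resolvent bound. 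The maximality of $\lambda_0$ enters only as an abstract spectral statement (unstable spectrum is pure point); no lower bound on $|1+\mathcal{D}|$ is ever needed. The payoff of the paper's route is robustness and a clean accounting of the $+2$ losses.

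There is, however, a concrete gap in your write-up. You say the $\Im z$-decay that makes the contour integral converge comes from ``integrating by parts in $v$ along the direction $k$'' and that this is ``better'' than the $\frac{1}{z}\bigl(1-\frac{ik\cdot v}{z+ik\cdot v}\bigr)$ expansion you mention first. That is backwards: since $\frac{1}{z+ik\cdot v}=\frac{1}{ik_1}\partial_{v_1}\log(z+ik\cdot v)$, integration by parts in $v$ trades the resolvent factor for a logarithm, which \emph{grows} like $\log|\Im z|$ on the contour rather than decaying; you gain nothing toward integrability. The correct mechanism is precisely the expansion you dismissed, which is the Fourier-side version of the paper's identity $(\lambda-L_0)^{-1}=\lambda^{-1}(\lambda-L_0)^{-1}L_0+\lambda^{-1}$, and it costs one $v$-weight (from the factor $v$) and one $x$-derivative (from the factor $k$) per iteration, giving $H^{n+2}_{m+2}\to H^n_m$ after two steps. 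Your claim that summing in $k$ ``costs two $x$-derivatives because of the $|k|^{-2}$ in $\Delta_y^{-1}$ balanced against the $ik$ in $\nabla_y$'' is also off: $\nabla_y\Delta_y^{-1}$ is an order $-1$ operator, i.e.\ a \emph{gain} in $y$, and the self-consistent term causes no $y$-derivative loss at all --- both the $x$ and $v$ losses come from the $\frac1z$ expansion alone. Finally, the uniform-in-$k$ lower bound on $|1+\mathcal{D}(z,k)|$ that you correctly flag as the crux is work that the paper simply does not need to do; if you pursue your route you must actually carry out the Penrose-type argument rather than cite it as ``standard.''
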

 \begin{proof} Let $n \ge 0, m\ge 2$. For each $f$ in ${H^{n}_2}$ with $\iint f \, dy dv =0$, we denote $\phi = - \Delta^{-1} \rho (f)$. The standard elliptic theory yields 
\begin{equation}\label{Pellip-est} \| \nabla \phi\|_{H^{n+1} } \le C_0 \|\rho(f)\|_{H^n } \le C_0 \|\langle v\rangle^2  f\|_{H^{n}_y L^2_v},\end{equation}
in which $C_0$ is some universal constant. We consider the resolvent equation:
$$ (\lambda - L_0)f = h, \qquad   \, \Re \lambda \geq 0.$$
Standard $L^2$ energy estimates yield at once
$$ \Re \lambda \| \langle v\rangle^m f\|_{L^2} \le \| \langle v\rangle^m \nabla_v \mu \|_{L^2 (\mathbb{R}^3)} \| \nabla \phi\|_{L^2} + \| \langle v\rangle^m h\|_{L^2}.$$  
Therefore we deduce the following weighted $L^2$ resolvent bound, for $\Re \lambda > \gamma_{0,m}:= C_0 \| \langle v\rangle^m \nabla_v \mu \|_{L^2 (\mathbb{R}^3)}$,
$$
\| \langle v\rangle^m (\lambda - L_0)^{-1} h\|_{L^2} \le \frac{1}{\Re \lambda - \gamma_{0,m} } \| \langle v\rangle^m h\|_{L^2}
$$
Similarly, higher derivatives estimates are obtained in the similar fashion, since we observe that we have for $\alpha, \beta \in \mathbb{N}^3$,
$$
\begin{aligned}
 \lambda \partial_v^\beta \partial_y^\alpha f + v \cdot \nabla_y \partial_v^\beta \partial_y^\alpha f  - \nabla_v \partial_v^\beta \mu \cdot \nabla \partial_y^\alpha \phi(f)  + [\partial_v^\beta, v\cdot \nabla_y] \partial_y^\alpha f &= \partial_v^\beta \partial_y^\alpha h
\end{aligned}
 $$
in which 
$[\partial_v^\beta, v\cdot \nabla_y] = \partial_v^\beta (v\cdot \nabla_y ) - v\cdot \nabla_y \partial_v^\beta$. This identity, together with the elliptic estimate \eqref{Pellip-est}, first yields
$$
\begin{aligned}
\Re \lambda \| \langle v\rangle^m \partial_y^\alpha f \|_{L^2} 
&\le \| \langle v\rangle^m\nabla_v  \mu \cdot \nabla \partial_y^\alpha \phi(f)\|_{L^2} +  \| \langle v\rangle^m \partial_y^\alpha h \|_{L^2} 
\\& \le C_0 \| \langle v\rangle^m\nabla_v \mu\|_{L^2(\mathbb{R}^3)}  \|\langle v\rangle^2 f\|_{H^{|\alpha|-1 }_y L^2_v}  +  \| \langle v\rangle^m \partial_y^\alpha h \|_{L^2}.
\end{aligned}
 $$
By a straightforward induction, we find
\begin{equation}\label{Hn-exp}
\begin{aligned}
\Re \lambda \|\langle v\rangle^m  f \|_{H_y^{|\alpha|} L^2_v}  \le C'_0 \| \langle v \rangle^2 f\|_{L^2}  + C'_0  \| \langle v\rangle^m  h \|_{H_y^{|\alpha|} L^2_v}.
\end{aligned}
 \end{equation}
Similarly, we have
$$
\begin{aligned}
\Re \lambda \| \langle v\rangle^m \partial_v^\beta \partial_y^\alpha f \|_{L^2} 
&\le \| \langle v\rangle^m\nabla_v \partial_v^\beta \mu \cdot \nabla \partial_y^\alpha \phi(f)\|_{L^2}  + \| \langle v\rangle^m[\partial_v^\beta, v\cdot \nabla_y] \partial_y^\alpha f\|_{L^2} + \| \langle v\rangle^m \partial_v^\beta \partial_y^\alpha h \|_{L^2} 
\\
&\le C_0 \| \langle v\rangle^m\nabla_v \partial_v^\beta \mu\|_{L^2(\mathbb{R}^3)}  \| \langle v\rangle^2  f\|_{H^{|\alpha|-1}_y L^2_v}  \\
&\qquad \qquad+ C_0 \| \langle v\rangle^m f \|_{ H_y^{|\alpha|+1}  H_v^{|\beta|-1}} + \| \langle v\rangle^m \partial_v^\beta \partial_y^\alpha h\|_{L^2}.
\end{aligned}
 $$
Again, by induction, this proves 
$$\begin{aligned}
\Re \lambda \| \langle v\rangle^m \partial_v^\beta \partial_y^\alpha f \|_{L^2} 
&\le C'_0 \| \langle v\rangle^2  f\|_{H^{|\alpha|+|\beta|-1}_y L^2_v}  + C'_0 \| \langle v\rangle^m f\|_{H^{|\alpha|+|\beta|}_y L^2_v} + C'_0 \|  h\|_{H_m^{n}} ,
\end{aligned}
$$
which together with the above bound \eqref{Hn-exp} on $\partial_y^{|\alpha| + |\beta|} f$ gives, considering all multi-indices $\alpha, \beta$ such that $|\alpha| + |\beta| \leq n$, that there exists  $\gamma_{n,m}, C_{n,m}>0$ such that
\begin{equation}\label{high-bound}
\begin{aligned}
\Re \lambda \| f \|_{H_m^{n}} 
&\le \gamma_{n,m} \| \langle v \rangle^2 f\|_{L^2}   + C_{n,m} \|  h\|_{H_m^{n}} ,
\end{aligned}
 \end{equation}
for $m\ge 2$. In particular, this proves that
$$\|  (\lambda - L_0)^{-1} h\|_{H_m^{n}} \le \frac{C_{n,m}}{\Re \lambda - \gamma_{n,m}} \| h\|_{H_{m}^{n}},$$
for some positive constant $C_{\gamma_{n,m}}$, and for all $\lambda \in \mathbb{C}$ so that $\Re \lambda >\gamma_{n,m}$. The classical Hille-Yosida theorem then asserts that $L_0$ generates a continuous semigroup $e^{L_0s}$ on the Banach space $L^2_m$ (and hence, on $H_{m}^{n}$); see, for instance, \cite{Pazy} or \cite[Appendix A]{Zum}. In addition, there holds the following representation for the semigroup: 
\begin{equation}
\label{eLs}
e^{L_0s} h= \text{P.V. } \frac{1}{2\pi i} \int_{\gamma - i\infty}^{\gamma + i \infty} e^{\lambda s} (\lambda - L_0)^{-1} h \; d\lambda \end{equation}
for any $\gamma > \gamma_{n,m}$, where $\text{P.V. }$ denotes the Cauchy principal value.

Next, by assumption, $\lambda_0$ is an unstable eigenvalue with maximal real part, and the resolvent operator $(\lambda - L_0)^{-1}$ is in fact a well-defined and bounded operator on $H_{m}^{n}$ for all $\lambda$ so that $\Re \lambda > \Re \lambda_0$. 
Let $\beta>0$ and take $\gamma = \Re \lambda_0 + \beta$.
Using the boundedness of the resolvent operator, we obtain at once
\begin{equation}\label{bounded-l} \Big\| \frac{1}{2\pi i} \int_{\gamma - iM}^{\gamma + i M} e^{\lambda s} (\lambda - L_0)^{-1} h \; d\lambda \Big\|_{H_m^{n}} \le C_{\beta, M} e^{(\Re \lambda_0 + \beta) s} \| h\|_{H_m^{n}},\end{equation}
for any large but fixed constant $M$. To treat the integral for large $\Im \lambda$, we observe that directly from the equation $\lambda f = L_0 f + h$, one has
$$ |\lambda| \|  f \|_{H_m^{n}} \le \| L_0 f + h \|_{H_m^{n}} \le C ( \| \nabla_yf\|_{H_{m+1}^{n}} + \| \nabla \phi\|_{H^n})+ \| h\|_{H_m^{n}}.$$
Using \eqref{high-bound}, there holds, for some $C'_{n+1,m+1}>0$,
$$
 |\lambda|   \| (\lambda - L_0)^{-1} h\|_{H_m^{n}} \le {C'_{n+1,m+1}} \| \langle v \rangle^2 f \|_{L^2} + C'_{n+1,m+1} \| h \|_{H^{n+1}_{m+1}}.
$$
We take $ \Re \lambda = \gamma$, and consider 
$$|\Im \lambda|>\frac{2}{3}C'_{n+1,m+1}.$$
We deduce 
\begin{equation}\label{large-l} \| (\lambda - L_0)^{-1} h\|_{H_m^{n}} \le \frac{C_\beta}{|\Im \lambda|} \| h\|_{H_{m+1}^{n+1}}.\end{equation}
Now, to estimate the integral for large $|\Im \lambda|$, we may write $$(\lambda - L_0)^{-1} h = \frac1\lambda (\lambda - L_0)^{-1} L_0h + \frac h \lambda.$$
Thus, with $\gamma = \Re \lambda_0 + \beta$, we get
$$ 
\begin{aligned}
\text{P.V. } &\frac{1}{2\pi i} \int_{\{|\Im \lambda |\ge M\}}e^{\lambda s} (\lambda - L_0)^{-1} h \; d\lambda  
\\&= \text{P.V. }\frac{1}{2\pi i} \int_{\{|\Im \lambda|\ge M\}} e^{\lambda s} (\lambda - L_0)^{-1} \frac{L_0h}{\lambda} \; d\lambda 
+ \text{P.V. } \frac{1}{2\pi i} \Big[ \int_{\gamma - i\infty}^{\gamma + i \infty}  - \int_{\{|\Im \lambda|< M\}} \Big] e^{\lambda s} \frac{h}{\lambda} \; d\lambda 
\end{aligned}$$
in which the second integral on the right-hand side is equal to $h$, whereas the last integral is bounded by $C_0e^{\gamma s}h$. We take $M\geq \frac{2}{3}C'_{n+1,m+1}$ so that the bound \eqref{large-l} holds. This yields
$$
\begin{aligned}
 \Big \| \int_{\{|\Im \lambda|\ge M\}} e^{\lambda s} (\lambda - L_0)^{-1} \frac{L_0h}{\lambda} \; d\lambda \Big\|_{H_m^{n}} 
&\le C_{\beta, M} e^{\gamma s} \| L_0 h\|_{H_{m+1}^{n+1}}  \int_{\{|\Im \lambda|\ge M\}} |\Im \lambda|^{-2} \; d\Im \lambda 
\\
&\le
C_{\beta, M} e^{\gamma s} \| h\|_{H_{m+2}^{n+2}}  .
\end{aligned}$$
Putting these together and combining with \eqref{bounded-l}, we get  
\begin{equation}\label{bound-els1}\| e^{L_0 s} h\|_{H_m^{n}} \le C_{\beta} e^{(\Re \lambda_0 + \beta) s} \| h\|_{H_{m+2}^{n+2}},\end{equation}
for any $\beta >0$. The lemma is proved. 
 \end{proof}

\bigskip

Next, we derive a few estimates on the electromagnetic field. Recall that the standard elliptic theory yields the elliptic estimate \eqref{Pellip-est} for $\phi = -\Delta^{-1} \rho (f)$. In addition, together with a use of the Vlasov equation $(\partial_s - L) f = 0$, the function $\partial_s \phi$ satisfies $ - \Delta \partial_s \phi = \partial_s \rho(f) = - \nabla \cdot j(f),$
which then yields 
$$ \| \partial_s  \nabla\phi\|_{H^{n} } \le C_0 \|j(f)\|_{H^n } \le C_0 \| f\|_{H^{n}_3}.$$
Similarly, the standard $H^n$ theory for the wave equation \eqref{ellip-pert} for $A = A(f)$ yields 
\begin{equation}\label{est-phiA}\begin{aligned}
 \frac12 \frac{d}{ds} \Big( \| \varepsilon \partial_s A\|_{H^n }^2 + \| \nabla A\|_{H^n }^2 \Big) 
 & \le C_0 \| \varepsilon \partial_sA\|_{H^n } \Big( \| j(f)\|_{H^n } + \|\partial_s \nabla \phi\|_{H^n }\Big)
 \\
 & \le C_0 \| \varepsilon \partial_sA\|_{H^n } \| f\|_{H^{n}_3}
 .\end{aligned}\end{equation}
 Applying the Gronwall inequality to the above, we obtain
\begin{equation}\label{A-est}\begin{aligned}
\| \varepsilon \partial_s A\|_{H^n } + \| \nabla A\|_{H^n }  & \le C_0  \int_0^s \| f(\tau )\|_{H^{n}_3}\; d\tau
 .\end{aligned}\end{equation}
In particular, by a view of the definition of the fields $E,B$ in term of the electromagnetic potentials, we get 
\begin{equation}\label{EB-est}\begin{aligned}
\| (E,B)\|_{H^n }  & \le C_0  \int_0^s \| f(\tau )\|_{H^{n}_3}\; d\tau
 .\end{aligned}\end{equation}

The following gives a link between the Vlasov-Poisson solution $g_k$ and the Vlasov-Maxwell solution $f_k$ as defined in~\eqref{lin-fk}.
\begin{lemma}\label{lem-gf}
Let $g$ be in $H_m^{n}$, for $n\ge 0$ and $m\ge 3$, with $\int g \, dv dx =0$ and let $T$ be a positive number so that $\varepsilon T \ll 1$. There exists a solution $f$ in $L^\infty(0,T; H_m^{n})$ solving the linear problem: 
\begin{equation}\label{gf} 
f - \varepsilon A \cdot \nabla_v \mu = g, \qquad \varepsilon^2 \partial_s^2 A - \Delta A= \varepsilon(j(f) + \partial_s \nabla \Delta^{-1} \rho(g)), \qquad {A}_{\vert_{s=0}} = \partial_s {A}_{\vert_{s=0}} = 0.
\end{equation}
In addition, there holds 
$$\sup_{\tau \in [0,s]} \| f(\tau)\|_{H_m^{n}} \le C_0 \sup_{\tau \in [0,s]} \| g(\tau)\|_{H_m^{n}}  + C_0 \int_0^s  \|  \langle v \rangle^3  g(\tau )\|_{L^2} \; d\tau, \qquad \forall ~ s\in [0,T],$$
for some constant $C_0$ that is independent of $\varepsilon$. 

Furthermore, in the case where $\int \hat vg \; dv dx =0$ , we have $\int_{\mathbb{T}^3} A \, dx =0$ and the following upper and lower bound on $f$:
\begin{equation}
\label{e-bound}
c_0 \sup_{\tau \in [0,s]} \| g(\tau)\|_{H_m^{n}} \le \sup_{\tau \in [0,s]} \| f(\tau)\|_{H_m^{n}} \le C_0\sup_{\tau \in [0,s]} \| g(\tau)\|_{H_m^{n}}, \qquad \forall ~ s\in [0,T],
\end{equation}
for some constants $c_0>0, C_0>0$ that are independent of $\varepsilon$. 
\end{lemma}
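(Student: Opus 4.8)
The plan is to decouple \eqref{gf} into a single wave equation for $A$. Substituting $f=g+\varepsilon A\cdot\nabla_v\mu$ into the source and integrating by parts in $v$, one finds $j(A\cdot\nabla_v\mu)(s,y)=M_\varepsilon\,A(s,y)$, where $M_\varepsilon=-\int_{\mathbb{R}^3}(\nabla_v\otimes\hat v)\,\mu\,dv$; since $\mu$ is radial, $M_\varepsilon=-c(\varepsilon)\,\mathrm{Id}$ with $c(\varepsilon)=\int_{\mathbb{R}^3}\frac{1+\varepsilon^2(v_2^2+v_3^2)}{(1+\varepsilon^2|v|^2)^{3/2}}\,\mu\,dv>0$ and $c(\varepsilon)\to1$ as $\varepsilon\to0$. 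Thus \eqref{gf} is equivalent to the closed problem
\[
\varepsilon^2\partial_s^2A-\Delta A+\varepsilon^2 c(\varepsilon)A=\varepsilon\, j(g)+\varepsilon\,\partial_s\nabla\Delta^{-1}\rho(g),\qquad A_{|s=0}=\partial_s A_{|s=0}=0,
\]
after which one sets $f:=g+\varepsilon A\cdot\nabla_v\mu$. Existence of $A$ in $C([0,T];H^{n})\cap C^1([0,T];H^{n-1})$ (hence $f\in L^\infty(0,T;H_m^{n})$, using $\|A\cdot\nabla_v\mu\|_{H^{n}_m}\le C_\mu\|A\|_{H^{n}}$, valid since $\mu$ decays fast) follows from a Picard iteration: set $f^{(0)}=g$, $f^{(k+1)}=g+\varepsilon A(f^{(k)})\cdot\nabla_v\mu$ with $A(h)$ solving \eqref{gf} with source from $h$; two successive differences solve a wave equation with source of size $\mathcal O(\varepsilon^2\|A(f^{(k)})-A(f^{(k-1)})\|)$, so the standard wave energy estimate makes the map a contraction for $\varepsilon T\ll1$. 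I would then concentrate on the a priori bounds.

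For the $H^{n}_m$-bound I would run the standard wave energy estimate, exactly as in \eqref{est-phiA}--\eqref{A-est}, which together with the Coulomb gauge gives
\[
\|\varepsilon\partial_s A(s)\|_{H^{n}}+\|\nabla A(s)\|_{H^{n}}\le C\int_0^s\big(\|j(f)\|_{H^{n}}+\|\partial_s\nabla\Delta^{-1}\rho(g)\|_{H^{n}}+\varepsilon\|A\|_{H^{n}}\big)(\tau)\,d\tau.
\]
Here $\|j(f)\|_{H^{n}}\le C\|f\|_{H^{n}_3}\le C\|g\|_{H^{n}_m}+C\varepsilon\|A\|_{H^{n}}$, and, using the linearized Vlasov--Poisson equation solved by $g$ (for which $\int(\text{source})\,dv=0$, so $\partial_s\rho(g)=-\nabla\cdot\int v\,g\,dv$), the boundedness of Riesz transforms yields $\|\partial_s\nabla\Delta^{-1}\rho(g)\|_{H^{n}}\le C\|g\|_{H^{n}_3}$. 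The spatial average $\langle A\rangle$ is handled separately: averaging the wave equation over $\mathbb{T}^3$ removes $\Delta A$ and the gradient source and leaves the harmonic oscillator $\partial_s^2\langle A\rangle+c(\varepsilon)\langle A\rangle=\tfrac1\varepsilon\langle j(g)\rangle$ with zero data, so by Duhamel and $c(\varepsilon)$ bounded below,
\[
\varepsilon\,|\langle A\rangle(s)|\le \tfrac1{\sqrt{c(\varepsilon)}}\int_0^s|\langle j(g)(\tau)\rangle|\,d\tau\le C\int_0^s\|\langle v\rangle^3 g(\tau)\|_{L^2}\,d\tau,
\]
the last step by Cauchy--Schwarz in $v$ (note $|v|\langle v\rangle^{-3}\in L^2(\mathbb{R}^3)$). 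Combining these with $\|A\|_{H^{n}}\le\|\nabla A\|_{H^{n}}+|\langle A\rangle|$, a Gronwall argument absorbing the $\varepsilon\|A\|_{H^{n}}$ term via $\varepsilon T\ll1$, and $\|f\|_{H^{n}_m}\le\|g\|_{H^{n}_m}+C_\mu\varepsilon\|A\|_{H^{n}}$, yields $\sup_{[0,s]}\|f\|_{H^{n}_m}\le C_0\sup_{[0,s]}\|g\|_{H^{n}_m}+C_0\int_0^s\|\langle v\rangle^3 g\|_{L^2}\,d\tau$.

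For the refined two-sided estimate \eqref{e-bound} I would use the extra hypothesis $\int\hat v\,g\,dv\,dx=0$, i.e.\ $\langle j(g)\rangle\equiv0$. Then the $\langle A\rangle$-equation has zero forcing and zero data, so $\langle A\rangle\equiv0$ (as recorded in the statement), whence $\|A\|_{H^{n}}\le C\|\nabla A\|_{H^{n}}$ by Poincar\'e; the energy estimate then closes without the $\langle v\rangle^3$-term and, after Gronwall, gives $\|\nabla A(s)\|_{H^{n}}\le C\int_0^s\|g(\tau)\|_{H^{n}_m}\,d\tau$, hence $\varepsilon\|A(s)\cdot\nabla_v\mu\|_{H^{n}_m}\le C_\mu\,\varepsilon T\sup_{[0,s]}\|g\|_{H^{n}_m}$. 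Inserting this into $f=g+\varepsilon A\cdot\nabla_v\mu$ and using $\varepsilon T\ll1$ gives both $\sup\|f\|_{H^{n}_m}\le(1+C\varepsilon T)\sup\|g\|_{H^{n}_m}$ and $\sup\|g\|_{H^{n}_m}\le(1-C\varepsilon T)^{-1}\sup\|f\|_{H^{n}_m}$, which is \eqref{e-bound}, e.g.\ with $\theta_0=\tfrac12$, once $\varepsilon T$ is small enough.

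The only delicate point — the main obstacle — is the zero Fourier mode $\langle A\rangle$: after dividing the wave equation by $\varepsilon^2$ its source carries a factor $\varepsilon^{-1}$, so $\langle A\rangle$ is a priori of size $\varepsilon^{-1}$, and without further structure $\varepsilon\langle A\rangle=f-g$ (minus its gradient part) would not stay controlled over the long interval $[0,T]$ with $\varepsilon T\ll1$ required in the applications. What saves the estimate is the \emph{conservative} (harmonic-oscillator) structure of the $\langle A\rangle$-equation, itself a consequence of the radial symmetry of $\mu$ forcing $M_\varepsilon$ to be a \emph{negative} scalar matrix; a component of the wrong sign would produce exponential growth on this time scale. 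The remaining ingredients — the wave energy estimates, the elliptic bound \eqref{Pellip-est}, the Riesz bound for $\partial_s\nabla\Delta^{-1}\rho(g)$, and the Gronwall closure — are routine.
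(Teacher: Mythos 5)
Your proposal follows essentially the same route as the paper: both reduce the problem to a wave (here Klein--Gordon) equation for $A$, isolate the spatial average $\langle A\rangle$, and recognize that radiality of $\mu$ makes $j(A\cdot\nabla_v\mu)$ a \emph{negative} scalar multiple of $A$, so that $\langle A\rangle$ obeys a forced harmonic oscillator with zero data, hence $\varepsilon\langle A\rangle$ is bounded by $\int_0^s |\langle j(g)\rangle|\,d\tau$; Poincar\'e handles the mean-zero part, and a fixed-point argument closes the construction for $\varepsilon T\ll 1$. The one genuine (if cosmetic) streamlining in your write-up is that you observe $j(A\cdot\nabla_v\mu)=-c(\varepsilon)A$ pointwise in $y$, giving a closed Klein--Gordon equation for $A$, whereas the paper only invokes this identity after averaging in $y$; both then hinge on exactly the same oscillator structure for the zero mode. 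Incidentally, your diagonal-entry computation $c(\varepsilon)=\int \frac{1+\varepsilon^2(v_2^2+v_3^2)}{(1+\varepsilon^2|v|^2)^{3/2}}\mu\,dv$ is the correct coefficient; the paper's displayed $c_0=\int\frac{3+2\varepsilon^2|v|^2}{(1+\varepsilon^2|v|^2)^{3/2}}\mu\,dv$ is in fact the trace (three times your constant), a harmless typo since only positivity is used.

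Two small points to tighten. First, in the a priori estimate you list both $\|j(f)\|_{H^n}$ and $\varepsilon\|A\|_{H^n}$ on the right of the wave energy inequality; if you work with the Klein--Gordon form, the $\varepsilon^2 c(\varepsilon)A$ term should be incorporated into the conserved energy $\|\varepsilon\partial_s A\|^2+\|\nabla A\|^2+\varepsilon^2 c(\varepsilon)\|A\|^2$ rather than estimated as a source, which cleans up the absorption step (otherwise you must still control $\|A\|_{H^n}$ via Poincar\'e plus $|\langle A\rangle|$, as you do afterwards). Second, in the Picard iteration the contraction needs control of $\|A(f^{(k)})-A(f^{(k-1)})\|_{H^n}$ itself, not just its gradient, so the zero-mode ODE must be run on differences as well; this works since $\langle j(g)\rangle$ cancels in the difference, leaving only the oscillator term, but it should be said. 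Neither point changes the outcome.
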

\begin{proof} We start by establishing a priori estimates. We first note that there holds the Poincar\'e inequality: 
$$ \Big\|A - \langle A\rangle \Big \|_{H^n } 
\lesssim  \| \nabla A \|_{H^n } \le C_0 \int_0^s \| f(\tau)\|_{H^{n}_3}\; d\tau,$$
in which $\langle A\rangle $ denotes the average of $A$ over $\mathbb{T}^3$. This yields at once 
\begin{equation}\label{avg-A} 
\begin{aligned}
\| f - \varepsilon \langle A \rangle \cdot \nabla_v \mu\|_{H_m^{n}}&\le \| f  - \varepsilon A \cdot \nabla_v \mu \|_{H_m^{n}} + \| \varepsilon A \cdot \nabla_v \mu- \varepsilon \langle A \rangle \cdot \nabla_v \mu\|_{H_m^{n}} \\
&\le \| g\|_{H_m^{n}} +  C_0 \varepsilon \int_0^s \| f(\tau)\|_{H^{n}_3}\; d\tau .
\end{aligned}
\end{equation}
Let us bound the average of $A$. Directly from the wave equation for $A$ and the equation for $f$ in terms of $g$, we get 
$$ \begin{aligned}
\varepsilon \frac{d^2}{ds^2} \langle A \rangle 
&= \langle j \rangle = \Big \langle \int \hat v g\; dv  \Big \rangle + \varepsilon \Big \langle \int \hat v (A \cdot \nabla_v \mu)\; dv \Big \rangle
\\
& = \Big \langle \int \hat v g\; dv  \Big \rangle  -   { \varepsilon D \langle A \rangle,}
\end{aligned}$$
{in which $D$ is the diagonal matrix
$$
D:= \text{Diag} \left(-\int \partial_{v_i} \mu \hat{v}_i \, dv\right),
$$
where we note that the diagonal coefficients are positive.
}

First, we consider the case when the average of $\int \hat v g(s )\; dv$ is equal to zero. In this case, we clearly have$\langle A(s) \rangle =0$ and thus the bound \eqref{avg-A} reads 
$$ \| f (s)\|_{H^{n}_m} \le \| g\|_{H^{n}_m} +  C_0 \varepsilon \int_0^s \| f(\tau)\|_{H^{n}_3}\; d\tau.$$
By a standard fixed point argument, we obtain the existence of $f$ in $L^\infty (0,T; H^{n}_m)$ and satisfying \eqref{gf} , as long as $\varepsilon T\ll 1$ and $m\ge 3$. We straightaway deduce the upper bound of~\eqref{e-bound}, while the lower bound is obtained as follows:
$$
\begin{aligned}
\sup_{\tau \in [0,s]} \| g(\tau) \|_{H^{n}_m} &\le  \sup_{\tau \in [0,s]} \| f (\tau) \|_{H^{n}_m}  + C_0 \varepsilon \sup_{\tau \in [0,s]} \| A(\tau)  \|_{H^{n}}  \\
&\le  \sup_{\tau \in [0,s]} \| f (\tau) \|_{H^{n}_m}  + C_0 \varepsilon \int_0^s \| f(\tau)\|_{H^{n}_3}\; d\tau \\
&\le \frac{1}{c_0} \sup_{\tau \in [0,s]} \| f (\tau) \|_{H^{n}_m}.
\end{aligned}
$$


In the general case when the average of $\int \hat v g(s )\; dv$ is not equal to zero, we define $\langle \varepsilon A(s) \rangle$  as the solution of the ordinary differential equation 
$$ \frac{d^2}{ds^2} \langle\varepsilon A(s) \rangle 
+ D \langle \varepsilon A (s) \rangle  = \Big \langle \int \hat v g (s)\; dv  \Big \rangle  .$$
Since the fundamental solutions to the homogeneous equation {$Y'' + D Y =0$} are bounded, the above yields at once 
$$ \varepsilon |\langle A(s)\rangle | \le \int_0^s \Big|  \Big \langle \int \hat v g(\tau )\; dv  \Big \rangle \Big| \; d\tau \le C_0 \int_0^s \| \langle v \rangle^3 g(\tau )\|_{L^2}\; d\tau.$$
We now establish a bound for $f$ satisfying 
$$ f - \varepsilon (A - \langle A \rangle) \cdot \nabla_v \mu = g + \varepsilon \langle A \rangle \cdot \nabla_v\mu .$$
The second term on the left-hand side is again a small perturbation in terms of $f$, yielding 
$$ \| f (s)\|_{H^{n}_m} \le \| g\|_{H^{n}_m} + C_0 \int_0^s \|  \langle v \rangle^3  g(\tau )\|_{L^2}\; d\tau +  C_0 \varepsilon \int_0^s \| f(\tau)\|_{H^{n}_3}\; d\tau .$$
Thus, as long as $\varepsilon T \ll 1$, 
we can use as well a fixed point argument, yielding the existence of $f$ as well as the claimed bound.\end{proof}

\subsection{Error estimates}
Let us now give estimates on the approximate solution $f_\mathrm{app}$ and the error of the approximation $R(f_\mathrm{app})$. Let $N$ be a fixed number of the iteration in $f_\mathrm{app}$ and let 
$$m\ge 5N+3 \text{  and  } n> 3N+5/2.$$ 
By a view of the wave estimate \eqref{A-est} and the fact that $\mu$ decays rapidly at infinity, we have for all $n, m \geq 0$,
\begin{equation}\label{bound-SQ}
\begin{aligned}
\| \tilde S(f)(s)\|_{H^{n}_m}&\le C_0 \| \nabla A\|_{H^n } \le C_0 \int_0^s \| f(\tau)\|_{H^{n}_3}\; d\tau  
\\
\|  T(f)(s)\|_{H_{m-3}^{n-1}}&\le C_0 \| f\|_{H_{m}^{n}},
\\
 \| Q(f, g)(s)\|_{H_{m-2}^{n-1}} & \le  C_0  \|  (E, B)(s)\|_{H ^n} \|  g (s) \|_{H^{n}_m}  \le C_0 \|  g (s) \|_{H^{n}_m} \int_0^s \| f(\tau)\|_{H^{n}_3}\; d\tau.
\end{aligned}\end{equation}
We apply Lemma \ref{lem-gf} to the linear problem \eqref{g-to-f} for $f_1$, which yields 
\begin{equation}\label{upper-bd}
\| f_1 \|_{H^{n}_m} \le C_0 e^{\Re \lambda_0 s} .
\end{equation}
By induction, we shall prove 
\begin{align}\label{bound-fk}
\| f_k(s) \|_{H_{m-5k+5}^{n-3k+3}} + \| g_k(s) \|_{H_{m-5k+5}^{n-3k+3}} 
&\le  C_k e^{\left(1+ \frac{k-1}{p}\right)\Re \lambda_0 s}, \qquad 1 \leq k \leq N,
\end{align}
for all $s\in [0,T]$, with $\varepsilon T \ll 1$. 
The case $k=1$ is clear. Assuming the bound holds for all  $j \in \{1,\cdots,k\}$, with $k \ge 1$, we now prove the bound for $j=k+1\ge 2$. Writing a Duhamel  formula for the nonhomogeneous equation \eqref{lin-fk} on $g_{k+1}$, we find 
$$ g_{k+1} =  - \int_0^s e^{L_0(s - \tau)} \Big[\tilde S(f_{k}) + T(f_{k-1})  +\sum_{\ell=1}^{k} Q(f_\ell, f_{k+2-p-\ell})\Big](\tau)\; d\tau.$$
Using Proposition~\ref{prop-exp}, the bounds in \eqref{bound-SQ} and the induction assumption, we can estimate 
$$\begin{aligned}
 &\| g_{k+1}\|_{H_{m-5k}^{n-3k}} \\
  &\le C_\beta \int_0^s e^{(\Re \lambda_0 + \beta)(s-\tau)}\Big[\|\tilde S(f_{k}) \|_{H_{m-5k+2}^{n-3k+2}} + \|T(f_{k-1}) \|_{H_{m-5k+2}^{n-3k+2}} + \sum_{\ell=1}^{k} \|Q(f_\ell,  f_{k+2-p-\ell})\|_{H_{m-5k+2}^{n-3k+2}} \Big](\tau)\; d\tau
 \\
  &\le C_\beta \int_0^s e^{(\Re \lambda_0 + \beta)(s-\tau)}\Big[ C_k e^{(1+\frac{k-1}{p}) \Re \lambda_0 \tau}+ \sum_{\ell=1}^{k} C_{k,\ell} e^{\left(1+ \frac{\ell-1}{p}\right) \Re \lambda_0 \tau} e^{\left(1+ \frac{k+1-p-\ell}{p}\right)\Re \lambda_0 \tau}  \Big]\; d\tau
 \\
  &\le C_\beta \int_0^s e^{(\Re \lambda_0 + \beta)(s-\tau)} e^{ \left(1+ \frac{k}{p}\right) \Re \lambda_0 \tau} \; d\tau
 \\
  &\le 
  C_k e^{ \left(1+ \frac{k}{p}\right) \Re \lambda_0 s} ,
 \end{aligned}$$
 in which we have chosen $\beta = \Re \lambda_0 /p$. Next, we apply Lemma \ref{lem-gf}, yielding 
 $$\sup_{\tau \in [0,s]} \| f_{k+1}(\tau)\|_{H_{m-5k}^{n-3k}} \le C_0 \sup_{\tau \in [0,s]} \| g_{k+1}(\tau)\|_{H_{m-5k}^{n-3k}}  + C_0 \int_0^s  \|  \langle v \rangle^3  g_{k+1}(\tau )\|_{L^2}\; d\tau \le C_k e^{ \left(1+ \frac{k}{p}\right) \Re \lambda_0 s} ,$$ 
which finishes the proof of the inductive bound \eqref{bound-fk}, for all $k\ge 1$.

Using these bounds on $f_k$ as well as \eqref{bound-SQ}, we can estimate the error of the approximation:
\begin{equation}
\label{remain} \begin{aligned}
\| R(f_\mathrm{app})(s)\|_{H^{n-3N}_{m-5N}} 
&\le \varepsilon^{N+p} (\| \tilde S(f_N)\|_{H_{m-5N}^{n-3N}}+  \|  T(f_{N-1})\|_{H_{m-5N}^{n-3N}}) +
\varepsilon^{N+p+1}  \|  T(f_{N})\|_{H_{m-5N}^{n-3N}}\\
&\qquad + \sum_{k+\ell > N+1-p;\; 1\le k,\ell\le N-1} \varepsilon^{2p+k+\ell-2} \| Q(f_k,f_\ell) \|_{H^{n-3N}_{m-5N}}
\\
&\le C_0 \varepsilon^{N+p} e^{\left(1+ \frac{N-1}{p}\right) \Re \lambda_0 s} + \sum_{k+\ell > N+1-p;\; 1\le k,\ell\le N-1} \varepsilon^{2p+k+\ell-2} e^{\left(2+ \frac{k+\ell-2}{p}\right) \Re \lambda_0 s}
\\&\le C_0 \varepsilon^{N+p} e^{ \left(1+ \frac{N-1}{p}\right) \Re \lambda_0 s} + C_0 \varepsilon^{N+p} e^{ \left(1+ \frac{N}{p}\right) \Re \lambda_0 s}
\\&\le C_0 \Big( \varepsilon^{p} e^{ \Re \lambda_0 s} \Big)^{\left(1+ \frac{N}{p}\right) },
\end{aligned}
\end{equation}
for all $s\ge 0$, as long as $\varepsilon^pe^{\Re \lambda_0 s}$ remains bounded.

\subsection{Nonlinear instability}
We are ready to conclude the proof of Theorem \ref{t-classical}. The instability result now follows from a standard energy estimate. Indeed, let $(f,\phi, A)$ be the exact perturbative solution to the Vlasov-Maxwell system:  
$$
\partial_s f + \hat{v} \cdot \nabla_y f + (E + \varepsilon \hat{v} \times B)\cdot \nabla_v (\mu + f) = 0.
$$
with the electromagnetic field solving the Maxwell equations. Consider $(f_\mathrm{app}, \phi_\mathrm{app}, A_\mathrm{app})$ the approximate solution constructed and studied in the previous sections. Let the difference be
$$(h, \phi_h, A_h) := (f - f_\mathrm{app}, \phi - \phi_\mathrm{app}, A - A_\mathrm{app}),$$
which solves 
$$\partial_s h + \hat{v} \cdot \nabla_y h  + (E_h + \varepsilon \hat{v}  \times B_h)\cdot \nabla_v (\mu + f_\mathrm{app}) +  (E_\mathrm{app} + E_h + \varepsilon \hat{v}  \times (B_\mathrm{app} + B_h))\cdot \nabla_v h = R(f_\mathrm{app}),
$$
with $h|_{t=0}=0$.
Standard weighted energy estimates yield, for $k > 7/2$,
$$ \frac12\frac{d}{ds} \| h\|_{H^{k}_3}^2 \le C_0  \| h \|_{H^{k}_3} \Big[ \|(E_h,B_h)\|_{H^k}  + \| R(f_\mathrm{app})\|_{H_3^{k}}\Big] +  \| h \|_{H^{k}_3}^2 \Big[ 1+ \|(E_h,B_h)\|_{H^k} \Big].
$$
Combining with the estimates \eqref{EB-est} on $E_h,B_h$  and with~\eqref{remain} yields at once
$$\begin{aligned}
 \frac 12 \frac{d}{ds} \Big( \| h(s)\|_{H^{k}_3}^2 +  \| (E_h,B_h)(s)\|_{H^k}^2 \Big) 
 & \le \Big[ C +  \| h (s)\|_{H^{k}_3} \Big]  \Big( \| h (s)\|_{H^{k}_3}^2 +  \| (E_h,B_h)(s)\|_{H^k}^2 \Big) 
 \\&\qquad  + C_0 \Big( \varepsilon^{p} e^{ \Re \lambda_0 s} \Big)^{2\left(1+ \frac{N}{p}\right) }.
 \end{aligned}$$ 

We now introduce 
$$ T^\varepsilon: = \sup \Big \{ s\ge 0~:~ \sup_{\tau \in [0,s]} \| h(\tau)\|_{H^{k}_3} \le \frac {\theta_0}2\varepsilon^p e^{\Re \lambda_0 \tau}\Big\},$$
where $\theta_0>2$ was fixed in~\eqref{e-theta}. 
By the standard local existence theory, we know that $T^\varepsilon >0$.

Now define 
$$\tilde T^\varepsilon:= \frac{1}{\Re \lambda_0}  |\log (\varepsilon^p \frac{\theta_0}{2}) |.$$ 
It follows that for all $s \in [0, \tilde T_\varepsilon]$, 
  \begin{equation}
\label{e-Teps}\frac {\theta_0}2\varepsilon^p e^{\Re \lambda_0 s} \leq 1,
\end{equation}
  If $\tilde{T}^\varepsilon > T^\varepsilon$, then for any $s\in [0,T^\varepsilon]$,
the above differential inequality yields 
$$\begin{aligned}
 \frac 12 \frac{d}{ds} \Big( \| h(s)\|_{H^{k}_3}^2 +  \| (E_h,B_h)(s)\|_{H^k}^2 \Big) 
 & \le (1+C)\Big( \| h(s)\|_{H^{k}_3}^2 +  \| (E_h,B_h)(s)\|_{H^k}^2 \Big) 
 \\&\qquad  + C_0 \Big( \varepsilon^{p} e^{ \Re \lambda_0 s} \Big)^{2\left(1+ \frac{N}{p}\right) }.
 \end{aligned}$$ 
Using the Gronwall inequality and imposing $N$ large enough so that 
$$N \ge C_0, \qquad 1+C \le \Big( 1+ \frac{N}{p}\Big) \Re \lambda_0, \qquad \frac{32  C_0}{\theta_0^2} \leq {(\theta_0/2)}^{\frac{2N}{p}}, $$ 
there holds 
$$ 
\begin{aligned}
 \| h(s)\|_{H^{k}_3}^2 +  \| (E_h,B_h)(s)\|_{H^k}^2 
 &\le 2C_0 \int_0^s e^{2(1+C) (s - \tau)} \Big( \varepsilon^{p} e^{ \Re \lambda_0 \tau } \Big)^{2\left(1+ \frac{N}{p}\right) }\; d\tau
 \\
 &\le 2C_0
 \Big( \varepsilon^{p} e^{ \Re \lambda_0 s} \Big)^{2\left(1+ \frac{N}{p}\right) }
  \\&\le  \frac{ 2C_0}{(\theta_0/2)^{\frac{2N}{p}}}   \Big(  \varepsilon^p e^{\Re \lambda_0 s} \Big)^2
 \\&\leq \frac{1}{16}\theta_0^2 \Big(  \varepsilon^p e^{\Re \lambda_0 s} \Big)^2=  \left(\frac {\theta_0}4\varepsilon^p e^{\Re \lambda_0 s}\right)^2 \\
 &< \frac {\theta_0}4\varepsilon^p e^{\Re \lambda_0 s}.
 \end{aligned}$$
This contradicts the definition of $T^\varepsilon$ and proves that we necessarily have $ T^\varepsilon  \ge  \tilde T^\varepsilon$.

\bigskip

Finally, recall that $f=  f_\mathrm{app} + h$. Thus, by the triangle inequality, as long as $s\in [0,\tilde T^\varepsilon]$, we get
\begin{equation}
\label{e-lower0} 
\begin{aligned}
\| f\|_{L^2} 
&\ge \| f_\mathrm{app}\|_{L^2} - \| h\|_{L^2} \\
&\ge  \| f_\mathrm{app}\|_{L^2}  - \frac {\theta_0}2\varepsilon^p e^{\Re \lambda_0 s}.
\end{aligned}
\end{equation}
We therefore need to get a lower bound on the $L^2$ norm $f_{\mathrm{app}}$. First, we have
$$
\| f_1 \|_{L^2} \geq \| g_1 \|_{L^2} -  \varepsilon \| A_1 \cdot \nabla_v \mu\|_{L^2}.
$$
 It follows from the construction that the average of $\int \hat v g_1\; dv$ is equal to zero. Indeed, by definition, we have
$$
\int \hat v  g_1\; dv dy = \int e^{i k_0 \cdot y} \, dy \int   \frac{1}{|k_0|^2} \frac{ \nabla_v \mu \cdot k_0}{k_0\cdot (v-\omega_0)} v \, dv =0.
$$
From Lemma~\ref{lem-gf}, we know that $\langle A_1 \rangle =0$, so that 
$$
\varepsilon \| A_1 \cdot \nabla_v \mu\|_{L^2} \leq \varepsilon C_0  \int_0^s \| f(\tau)\|_{H^{n}_3}\; d\tau.
$$
Recalling \eqref{upper-bd}, we end up with
$$
\varepsilon \| A_1 \cdot \nabla_v \mu\|_{L^2} \leq \varepsilon C_0 e^{\Re \lambda_0 s}.
$$
By \eqref{e-theta}, we deduce (at least for $\varepsilon>0$ small enough), 
$$
\|  f_1 \|_{L^2} \geq  \theta_0  e^{\Re \lambda_0 s}.
$$
Finally using~\eqref{bound-fk} to bound the contribution of the terms $f_k$, $k\geq 2$, we obtain
$$
\| f_\mathrm{app}\|_{L^2} \geq  \theta_0 \varepsilon^p e^{\Re \lambda_0 s} - C_N \Big(\varepsilon^p e^{\Re \lambda_0 s}\Big)^{1+\frac 1p} 
$$
and thus for  $s\in [0,\tilde T^\varepsilon]$,
\begin{equation}
\label{e-lower} 
\begin{aligned}
\| f(s)\|_{L^2} 
&\ge \theta_0 \varepsilon^p e^{\Re \lambda_0 s} - C_N \Big(\varepsilon^p e^{\Re \lambda_0 s}\Big)^{1+\frac 1p} - \frac {\theta_0}2\varepsilon^p e^{\Re \lambda_0 s} 
\\
& \ge \frac {\theta_0}2\varepsilon^p e^{\Re \lambda_0 s} \Big( 1 - \frac{2C_N}{\theta_0} \Big(\varepsilon^p e^{\Re \lambda_0 s}\Big)^{\frac 1p}\Big).
\end{aligned}
\end{equation}

Define finally $\overline T^\varepsilon :=  \frac{1}{\Re \lambda_0}  \left|\log (\varepsilon^p \frac{4C_N}{\theta_0}) \right|$.
For $s_\varepsilon := \min( \tilde T^\varepsilon, \overline T^\varepsilon)$ we end up with the lower bound 
\begin{equation}
\label{e-instabi}
\| f(s_\varepsilon)\|_{L^2(\mathbb{T}^3 \times \mathbb{R}^3)}  \ge \delta_0,
\end{equation}
with $\delta_0 =  \min \left( \frac{1}{2}, \frac{\theta_0^2}{16 C_N}\right)$. 

Similarly, for what concerns $H^{-\SS}$ instability, we can get as well
\begin{equation}
\label{e-lowerH-} 
\begin{aligned}
\| f(s_\varepsilon)\|_{H^{-\SS}} 
&\ge \varepsilon^p\|  f_1 \|_{H^{-\SS}}  - \| f_\mathrm{app} - \varepsilon^p f_1\|_{H^{-\SS}}- \| h\|_{H^{-\SS}}
\\
&\ge \varepsilon^p\| f_1 \|_{H^{-\SS}}  - \| f_\mathrm{app} - \varepsilon^p f_1\|_{L^2}- \| h\|_{L^2} 
\\&\ge \theta_0 \varepsilon^p e^{\Re \lambda_0 s_\varepsilon} - C_N \Big(\varepsilon^p e^{\Re \lambda_0 s_\varepsilon}\Big)^{1+\frac 1p} - \frac {\theta_0}2\varepsilon^p e^{\Re \lambda_0 s_\varepsilon} 
\\
& \ge 
\delta_0,
\end{aligned}
\end{equation}
 with $\delta_0$ as defined in \eqref{e-instabi}. Recalling that $f^\varepsilon-\mu= f$, this proves the first instability result of~\eqref{e-thm1}.

The instability on $\rho^\varepsilon$ and $j^\varepsilon$ is proved with similar estimates, using the \emph{weighted} $L^2$ error estimates.
The only thing to notice is that, recalling $g_1 = e^{\lambda_0 s} \hat g_1$, and by a view of~\eqref{e-g1} and of the Penrose condition~\eqref{Penrose}, we have $\rho(\hat g_1) \neq 0$ and $j(\hat g_1) \neq 0$. 
With the same arguments, we end up with
\begin{equation}
\label{e-rhoj}
\| \rho^\varepsilon(s)- 1 \|_{H^{-\SS}}  \ge \delta'_0, \quad \| j^\varepsilon(s) \|_{H^{-\SS}}  \ge \delta'_0,
\end{equation}
for some $\delta_0'>0$.

The instability on $E^\varepsilon$ then follows by a view of~\eqref{potentials-cl}. We write
$$
\| E^\varepsilon \|_{L^2}^2 = \| \nabla \phi^\varepsilon \|_{L^2}^2 + \| \varepsilon \partial_s A^\varepsilon \|_{L^2}^2 + \varepsilon \int \nabla \phi^\varepsilon \cdot \partial_s A^\varepsilon \, dx,
$$
and note that since $A^\varepsilon$ satisfies the Coulomb gauge $\nabla \cdot A^\varepsilon =0$,
$$
\int \nabla \phi^\varepsilon \cdot \partial_s A^\varepsilon \, dx = \int  \phi^\varepsilon  \,  \partial_s \nabla \cdot A^\varepsilon \, dx=0.
$$
We thus use~\eqref{e-rhoj} to get $\| \rho^\varepsilon- 1 \|_{H^{-1}}  \ge \delta'_0$ and obtain, using finally the Poisson equation~\eqref{ellip-pert},
$$
\| E^\varepsilon(s) \|_{L^2} \ge  \| \nabla \phi^\varepsilon(s) \|_{L^2} \ge C_0 \delta'_0.
$$

\bigskip

Let us finally complete the proof of Theorem \ref{t-classical} by briefly explaining how to deal with complex eigenvalues and eigenfunctions, as well as getting non-negative distribution functions.

We assume here that $\Im \lambda_0 \neq 0$.
 Writing $\hat{g_1} = \Re  g_1 + i \Im g_1$, and assuming without loss of generality that $\Re g_1 \neq 0$,
 we set  $g_1 = \Re (e^{\lambda_0 s} \hat{g_1})$ instead of the definition~\eqref{gmode}. Then one can perform exactly the same construction and analysis, except that the lower bound for $f_{\mathrm{app}}$ in~\eqref{e-lower} is achieved for all $s$ of the form $s= \frac{2\pi k}{\Im \lambda_0}$. This is sufficient to get the instability as in~\eqref{e-instabi}.
 
 For what concerns non-negativity, we just need to notice that the $\delta$-condition and the form of the eigenfunctions (recall~\eqref{e-g1}) ensure that $\varepsilon^p |g_1|_{\vert t=0} \leq \mu$, so that the initial condition satisfies $f^\varepsilon_{\vert t=0} \geq 0$.

\section{Invalidity of the quasineutral limit}
\label{s-quasineutral}

Let $\mu(v)$ be a smooth, normalized profile satisfying the $\delta$-condition and the sharp Penrose instability condition. 

For any $M>0$, we shall denote 
$$\mathbb{T}_M^3 := \mathbb{R}^3/(M \mathbb{Z}\times M \mathbb{Z} \times M \mathbb{Z}).$$
We recall that for a given length $M$, the sharp Penrose instability condition does not necessarily ensure the existence of a growing mode for the linearized equations. However, the latter is true for large enough values of $M$.
This is the content of the following Proposition, taken from \cite[Proposition 3.2]{HKH}. \begin{prop} \label{prop:Penlim}
Assume that $\mu$ is a smooth homogeneous profile satisfying the sharp Penrose instability condition. There exists $M_0>0$ such that if $ 
M \geq M_0 $, then the Penrose instability condition~\eqref{Penrose} is satisfied for the equations posed on $\mathbb{T}_M^3 \times \mathbb{R}^3$.
\end{prop}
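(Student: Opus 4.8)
The plan is to reduce the Penrose instability criterion~\eqref{Penrose} on $\mathbb{T}_M^3 \times \mathbb{R}^3$ to a one–dimensional dispersion relation and then to exploit that, on a large torus, the admissible wave numbers become arbitrarily small, so that the \emph{weak} inequality in Definition~\ref{d-penrose} is enough to place the relevant pole in the upper half plane. First I would record that the Fourier modes on $\mathbb{T}_M^3$ are indexed by $k \in \tfrac{2\pi}{M}\mathbb{Z}^3 \setminus\{0\}$, and that for a fixed direction $\hat k := k/|k| \in \mathbb{S}^2$, integrating~\eqref{Penrose} over the velocity components orthogonal to $\hat k$ reduces it to: find $\zeta \in \mathbb{C}$ with $\Im \zeta > 0$ such that
$$
\frac{1}{|k|^2}\int_{\mathbb{R}} \frac{\mu_{\hat k}'(s)}{s-\zeta}\, ds = 1, \qquad \mu_{\hat k}(s) := \int_{s\hat k + \hat k^\perp} \mu(w)\, dw,
$$
where $\zeta = \hat k\cdot\omega$, and the constraint $\Im\,\omega\cdot k > 0$ is equivalent to $\Im \zeta > 0$. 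When $\mu$ is radial — the case needed in Theorem~\ref{t-quasineutral} — $\mu_{\hat k}$ does not depend on $\hat k$, which makes this reduction particularly transparent.

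Next I would invoke the classical Nyquist/winding–number analysis of the plasma dispersion function (as in \cite{pen,Degond,GS}). By the Plemelj formula the boundary value $s \mapsto \int \frac{\mu_{\hat k}'(\sigma)}{\sigma - s - i0}\, d\sigma$ has imaginary part $\pi\mu_{\hat k}'(s)$, so the corresponding Nyquist curve crosses the real axis exactly at the critical points of $\mu_{\hat k}$, where, after one integration by parts, its abscissa equals $\int_{\mathbb{R}} \frac{\mu_{\hat k}(\sigma) - \mu_{\hat k}(\bar s)}{|\sigma - \bar s|^2}\, d\sigma$. The argument principle then produces a solution $\zeta$ with $\Im\zeta > 0$ of the reduced dispersion relation — equivalently, a pair $(k,\omega)$ satisfying~\eqref{Penrose} — as soon as $\mu_{\hat k}$ has a local minimum at some $\bar s$ with
$$
\int_{\mathbb{R}} \frac{\mu_{\hat k}(\sigma) - \mu_{\hat k}(\bar s)}{|\sigma - \bar s|^2}\, d\sigma > |k|^2.
$$
This is precisely the sufficient condition~\eqref{e-Pen}, but with the constant $4\pi^2$ replaced by $|k|^2$ and without the symmetry requirement of~\eqref{e-Pen}; the crucial point is that on $\mathbb{T}_M^3$ the smallest admissible $|k|^2$ equals $(2\pi/M)^2$, which tends to $0$ as $M \to \infty$.

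It then remains to choose $k$. Let $e \in \mathbb{S}^2$ be the direction provided by Definition~\ref{d-penrose}, so $\mu_e$ has a local minimum at $\bar s$ with $\int_{\mathbb{R}} \frac{\mu_e(\sigma) - \mu_e(\bar s)}{|\sigma-\bar s|^2}\, d\sigma > 0$. Since $\mu$ is smooth and rapidly decaying, $\hat k \mapsto \mu_{\hat k}$ is continuous; near a nondegenerate minimum the critical point $\bar s_{\hat k}$ and the above integral depend continuously on $\hat k$ (by the implicit function theorem, together with the quadratic lower bound $\mu_{\hat k}(\sigma) - \mu_{\hat k}(\bar s_{\hat k}) \gtrsim |\sigma - \bar s_{\hat k}|^2$ near the minimum, which furnishes the uniform integrability of the singular weight), and a flat minimum is handled by the same continuity argument performed uniformly over the plateau $[\bar s_1, \bar s_2]$, using the strict positivity imposed there by Definition~\ref{d-penrose}. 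Hence there exist $\kappa > 0$ and a neighbourhood $\mathcal{U}$ of $e$ in $\mathbb{S}^2$ such that every $\hat k \in \mathcal{U}$ admits a local minimum $\bar s_{\hat k}$ of $\mu_{\hat k}$ with $\int_{\mathbb{R}} \frac{\mu_{\hat k}(\sigma) - \mu_{\hat k}(\bar s_{\hat k})}{|\sigma - \bar s_{\hat k}|^2}\, d\sigma > \kappa$ (when $\mu$ is radial one may simply take $\mathcal{U} = \mathbb{S}^2$). I would then fix $n \in \mathbb{Z}^3 \setminus\{0\}$ with $n/|n| \in \mathcal{U}$, possible since rational directions are dense in $\mathbb{S}^2$, and set $M_0 := 2\pi|n|/\sqrt{\kappa}$. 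For every $M \geq M_0$ the wave number $k := \tfrac{2\pi}{M}n$ is admissible on $\mathbb{T}_M^3$ and satisfies $|k|^2 = 4\pi^2|n|^2/M^2 \leq \kappa < \int_{\mathbb{R}} \frac{\mu_{\hat k}(\sigma) - \mu_{\hat k}(\bar s_{\hat k})}{|\sigma - \bar s_{\hat k}|^2}\, d\sigma$, so the threshold condition of the previous step holds and~\eqref{Penrose} is satisfied for the equations posed on $\mathbb{T}_M^3 \times \mathbb{R}^3$.

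The step I expect to be the main obstacle is the continuity in the direction $\hat k$ of the Penrose functional at degenerate or flat local minima (the nondegenerate case being a direct application of the implicit function theorem); the remaining ingredients are either classical (the Nyquist/argument–principle analysis) or elementary (the rescaling in $M$). A detailed proof along these lines is given in \cite[Proposition~3.2]{HKH}.
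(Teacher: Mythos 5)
Your strategy — reduce to the one–dimensional dispersion relation, invoke a Nyquist argument, exploit the density of $\tfrac{2\pi}{M}\mathbb{Z}^3$ as $M\to\infty$, and patch up the flat‑minimum case by continuity — is indeed the right skeleton, and it matches what one expects from the cited source. The gap is in the Nyquist step, which you dismiss as ``classical'': the claim that $\eqref{Penrose}$ holds at wave number $k$ as soon as $\mu_{\hat k}$ has \emph{some} local minimum $\bar s$ with $\int \frac{\mu_{\hat k}(\sigma)-\mu_{\hat k}(\bar s)}{(\sigma-\bar s)^2}\,d\sigma > |k|^2$ is false in general. By the argument principle, the number of unstable roots equals the winding number of the Nyquist curve $\sigma\mapsto \mathrm{P.V.}\int\frac{\mu_{\hat k}'(s)}{s-\sigma}\,ds + i\pi\mu_{\hat k}'(\sigma)$ around $|k|^2$, and this winding number is the number of local \emph{minima} whose Penrose integral exceeds $|k|^2$ \emph{minus} the number of local \emph{maxima} whose Penrose integral exceeds $|k|^2$. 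Local maxima of $\mu_{\hat k}$ that are not global can have a strictly positive Penrose integral, and their contributions cancel those of the minima. Concretely, take $\mu_{\hat k}$ to be a tall Maxwellian plus a small secondary bump at $v_1>0$: the resulting profile has a local minimum $s_2$ and a local maximum $s_3\approx v_1$ with $0<a_3\le a_2$ (where $a_j$ denotes the Penrose integral at $s_j$); the winding number is $1$ for $|k|^2\in(a_3,a_2)$ but $0$ for $|k|^2\in(0,a_3)$, even though the sharp Penrose condition holds. Thus fixing $n\in\mathbb{Z}^3$ and sending $|k|=2\pi|n|/M\to 0$ can land you squarely in a stability window. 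Note that the symmetry hypothesis in the paper's sufficient condition $\eqref{e-Pen}$ is not a technical convenience you may discard: symmetry about the local minimum pairs up the positive‑abscissa crossings, making the winding number odd and hence $\ge 1$; the sharp Penrose condition of Definition~\ref{d-penrose} drops precisely this and loses the conclusion at small $|k|$.

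To repair the argument you must target the top of the unstable range rather than the bottom: by non‑negativity of the winding number, the supremum $a^\ast(\hat k)$ of the Penrose integral over all critical points of $\mu_{\hat k}$ is attained at a local minimum, and for $|k|^2$ in a left neighbourhood of $a^\ast(\hat k)$ the winding number is exactly $1$, so the Penrose criterion holds there. The sharp Penrose condition gives $a^\ast(\hat k)>0$ for $\hat k$ near $e$. One must then choose $n=n(M)\in\mathbb{Z}^3\setminus\{0\}$ with $n/|n|$ in the appropriate neighbourhood of $e$ and $(2\pi|n|/M)^2$ inside that left neighbourhood of $a^\ast$, which is possible for $M$ large because the spacing of the values $(2\pi/M)^2|n|^2$ near a fixed positive number shrinks like $1/M$. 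This still requires the continuity‑in‑$\hat k$ estimate you flagged, but the wavenumber can no longer be held fixed as $M\to\infty$.
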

As a matter of fact, the framework of \cite{HKH} is one-dimensional but this particular result straightforwardly extends to higher dimensions. Using this Proposition, we fix 
some large enough parameter $M>0$ such that 
the linearized Vlasov-Poisson operator $L_0$  on $\mathbb{T}^3_M \times \mathbb{R}^3$ has an eigenvalue 
with positive real part. From now on, we consider the sequence $\varepsilon_k = \frac{1}{kM}$, for $k \in 
\mathbb{N}^*$,  but we forget about the $k$ subscript for readability.

As already explained in the introduction, in the high spatial frequency regime, the study of the quasineutral limit
comes down to that of the classical limit.
More precisely, we shall first consider $\varepsilon M$-periodic (in all spatial directions) solutions  to~\eqref{VM}, that is we look for solutions to the system
\begin{equation}\label{VM2} 
\left \{ \begin{aligned}
\partial_t \tilde f_\varepsilon + \hat{v}  \cdot \nabla_x \tilde f_\varepsilon + (\tilde E_\varepsilon +  \hat{v}  \times \tilde B_\varepsilon)\cdot \nabla_v \tilde f_\varepsilon & =0,
\\
 \partial_t \tilde B_\varepsilon + \nabla_x \times \tilde E_\varepsilon = 0, \qquad \varepsilon^2 \nabla_x \cdot \tilde E_\varepsilon  &= \tilde \rho_\varepsilon - 1,
\\
-  \varepsilon^2 \partial_t \tilde E_\varepsilon + \nabla_x \times \tilde B_\varepsilon =  \tilde j_\varepsilon, \qquad \nabla_x \cdot \tilde B_\varepsilon & =0, \end{aligned}
\right.
\end{equation}
 for $t\geq 0, \,  x\in \mathbb{T}_{\varepsilon M}^3, \, v\in \mathbb{R}^3$ and where 
 $$
 \hat{v} = \frac{v}{\sqrt{1+  {\varepsilon^2 |v|^2}}},
 $$
 $$
\tilde \rho_\varepsilon (t,x)= \int_{\mathbb{R}^3} \tilde f_\varepsilon \; dv, \quad \tilde j_\varepsilon (t,x)= \int_{\mathbb{R}^3}\hat{v} \tilde f_\varepsilon\; dv.
 $$
 We can then obtain a solution $(f_\varepsilon,E_\varepsilon, B_\varepsilon)$ to 
\eqref{VM} by patching $(\varepsilon M)^{-3}$ copies of $(\tilde{f}_\varepsilon,\tilde E_\varepsilon, \tilde B_\varepsilon)$.
This means, identifying $\mathbb{T}^3_M$ to $[0,M)^3$, writing
$$f_\varepsilon(t,x,v) = \tilde{f}_\varepsilon(t, x_1- j_1 \varepsilon M, x_2- j_2 \varepsilon M, x_3- j_3 \varepsilon M, v), 
$$
for all $x = (x_1, x_2,x_3)$ in 
$$\prod_{i=1}^3 [j_i \varepsilon M, \, (j_i+1)\varepsilon M), \qquad j_1, j_2, j_3= 0,\cdots, k-1.$$
Similar formulas are given for $(E_\varepsilon, B_\varepsilon)$.

We can now perform the hyperbolic change of variables $(t,x,v) \to \left(\frac{t}\varepsilon, 
\frac{x}\varepsilon,v\right)$, i.e. we consider $(g_\varepsilon, \mathbb{E}_\varepsilon, \mathbb{B}_\varepsilon)$ such 
that:
\begin{equation}
\label{e-scaling}
\tilde{f}_\varepsilon(t,x,v) = g_\varepsilon\left(\frac{t}\varepsilon,\frac{x}\varepsilon,v\right), \quad \tilde{E}_\varepsilon(t,x,v) = \frac{1}{\varepsilon}\mathbb{E}_\varepsilon\left(\frac{t}\varepsilon,\frac{x}\varepsilon\right), \quad \tilde{B}_\varepsilon(t,x,v) = \mathbb{B}_\varepsilon\left(\frac{t}\varepsilon,\frac{x}\varepsilon\right).
\end{equation}
This leads to the study of the classical limit, for $s\geq 0$, $y\in \mathbb{T}_M$, $v\in \mathbb{R}$
\begin{equation}\label{clVM2} 
\left \{ \begin{aligned}
\partial_s g_\varepsilon + \hat{v}  \cdot \nabla_y g_\varepsilon + (\mathbb{E}_\varepsilon + \varepsilon \hat{v}  \times \mathbb{B}_\varepsilon)\cdot \nabla_v g_\varepsilon & =0,
\\
\varepsilon \partial_s \mathbb{B}_\varepsilon + \nabla_y \times \mathbb{E}_\varepsilon = 0, \qquad \nabla_y \cdot \mathbb{E}_\varepsilon  &= \int g_\varepsilon \, dv - 1
\\
- \varepsilon \partial_s \mathbb{E}_\varepsilon + \nabla_y \times \mathbb{B}_\varepsilon = \varepsilon \int \hat{v} g_\varepsilon \, dv  , \qquad \nabla_y \cdot \mathbb{B}_\varepsilon & =0. \end{aligned}
\right.
\end{equation}

We apply Theorem~\ref{t-classical} ($M$ being considered as a fixed transparent parameter).
Let $S,N \in \mathbb{N}^*$ and $p \in \mathbb{N}^*$, such that $p> S+ N$. We take $k=0$. By 
Theorem \ref{t-classical}, we find for all $\varepsilon \in (0,1]$ a solution $(g_\varepsilon, 
\mathbb{E}_\varepsilon, \mathbb{B}_\varepsilon)$ to \eqref{clVM2} 
with $g_\varepsilon\ge 0$, such that 
\begin{equation}
\| (1+| v|^2)^{\frac m2} ({g_{\varepsilon}}_{\vert_{s=0}}- \mu)\|_{H^S(\mathbb{T}_M^3\times \mathbb{R}^3)} \leq \varepsilon^p,
\end{equation}
but there is a sequence of times $s_\varepsilon = \mathcal{O}(|\log \varepsilon|)$ such that
\begin{equation}
\liminf_{\varepsilon \rightarrow 0}  \left\| g_\varepsilon(s_\varepsilon) - \mu\right\|_{L^2(\mathbb{T}^3_M \times \mathbb{R}^3)} >0,\quad \liminf_{\varepsilon \rightarrow 0}  \left\| \mathbb{E}_\varepsilon(s_\varepsilon) \right\|_{L^2(\mathbb{T}^3_M)} >0,
\end{equation}
\begin{equation}
\liminf_{\varepsilon \rightarrow 0}\left\| \rho_{g_\varepsilon}(s_\varepsilon) - 1 \right\|_{L^2(\mathbb{T}^3_M)} >0,
\quad
\liminf_{\varepsilon \rightarrow 0}  \left\| j_{g_\varepsilon}(s_\varepsilon) \right\|_{L^2(\mathbb{T}^3_M)} >0,
\end{equation}
with the notation $ \rho_{g_\varepsilon}(t)= \int_{\mathbb{R}^3} g_\varepsilon(t,x,v) \,dv$ and $ j_{g_\varepsilon}(t)= \int_{\mathbb{R}^3} \hat{v} g_\varepsilon(t,x,v) \,dv$.

Next, a consequence of of the change of variable~\eqref{e-scaling} and of the $\varepsilon M$-periodicity of $f_\varepsilon$ is that:
\begin{equation} \label{e-back} 
\begin{aligned}
 &\| (1+| v|^2)^{\frac m2}({f_\varepsilon}_{\vert t=0}  - \mu) \|_{H^S(\mathbb{T}^3 \times \mathbb{R}^3)} \le C \frac{\varepsilon^{-S}}{M^3} \| {g_{\varepsilon}}_{\vert_{s=0}} -\mu  \|_{H^S(\mathbb{T}_M^3 \times \mathbb{R}^3)}  
 \\
 & \| f_\varepsilon(t) -1 \|_{L^2(\mathbb{T}^3 \times \mathbb{R}^3)} = \frac{1}{M^3} \|  {g_\varepsilon}\left({t}/{\varepsilon}\right) -1 \|_{L^2(\mathbb{T}_M^3 \times \mathbb{R}^3)}, \quad \varepsilon\| E_\varepsilon(t)  \|_{L^2(\mathbb{T}^3)} = \frac{1}{M^3} \|  \mathbb{E}_\varepsilon \left({t}/{\varepsilon}\right)  \|_{L^2(\mathbb{T}_M^3)}, \\
&\| \rho_\varepsilon(t) -1 \|_{L^2(\mathbb{T}^3)} = \frac{1}{M^3} \|  \rho_{g_\varepsilon}\left({t}/{\varepsilon}\right) -1 \|_{L^2(\mathbb{T}_M^3)},  
\quad \| j_\varepsilon(t) -1 \|_{L^2(\mathbb{T}^3 )} = \frac{1}{M^3} \|  j_{g_\varepsilon}\left({t}/{\varepsilon}\right)  -1 \|_{L^2(\mathbb{T}_M^3)}.
\end{aligned}
\end{equation}
We set $t_\varepsilon := \varepsilon s_\varepsilon  = \mathcal{O}(\varepsilon |\log \varepsilon|)$ and  deduce, at least for $\varepsilon$ small enough, 
\begin{align*}
&\| (1+| v|^2)^{\frac m2}({f_\varepsilon}_{\vert t=0}  - \mu) \|_{H^S(\mathbb{T}^3 \times \mathbb{R}^3)} \leq  \frac{1}{M^3} \,
\varepsilon^{p-S} \leq \varepsilon^N, \\
& \liminf_{\varepsilon \rightarrow 0}  \left\| f_\varepsilon(t_\varepsilon) - \mu\right\|_{L^2(\mathbb{T}^3\times \mathbb{R}^3)} >0,\quad \liminf_{\varepsilon \rightarrow 0}   \varepsilon \left\| E_\varepsilon(t_\varepsilon) \right\|_{L^2(\mathbb{T}^3)} >0, \\
&\liminf_{\varepsilon \rightarrow 0}  \left\| \rho_\varepsilon(t_\varepsilon) - 1 \right\|_{L^2(\mathbb{T}^3)} >0,
\quad
\liminf_{\varepsilon \rightarrow 0}  \left\| j_\varepsilon(t_\varepsilon) \right\|_{L^2(\mathbb{T}^3)} >0,
\end{align*}
which proves Theorem~\ref{t-quasineutral}.

\bigskip

\noindent {\bf Acknowledgements.} We are grateful to Pennsylvania State University and \'Ecole polytechnique for hospitality during the preparation of this work. TN was partly supported by the NSF under grant DMS-1405728 and by a two-month visiting position at \'Ecole polytechnique during Spring 2015.

We thank the referee for several suggestions that helped to improve the presentation of the paper and especially for correcting an earlier incorrect interpretation of Theorem~\ref{t-classical}.

{We are also indebted to Aymeric Baradat for pointing out that no radial profile in dimension $3$ satisfies the Penrose instability criterion.}

\bibliographystyle{plain}
\bibliography{eMHD}

\end{document}